\newtheorem{lem}{Lemma}[section]
\newtheorem{prop}{Proposition}[section]
\newtheorem{cor}{Corollary}[section]
\newtheorem{thm}{Theorem}[section]
\theoremstyle{definition}
\theoremstyle{remark}
\theoremstyle{remark}
\newtheorem{remark}{Remark}[section]
\numberwithin{equation}{section}
\newcommand{\vertiii}[1]{{\left\vert\kern-0.25ex\left\vert\kern-0.25ex\left\vert #1
    \right\vert\kern-0.25ex\right\vert\kern-0.25ex\right\vert}}
\definecolor{blu}{rgb}{0,0,1}
\newcommand{\be}{\begin{equation}}
\newcommand{\ee}{\end{equation}}
\def\section{\@startsection{section}{1}%
  \z@{1.5\linespacing\@plus\linespacing}{.5\linespacing}%
  {\normalfont\bfseries\large\centering}}
\begin{document}
\title[On completeness of modified wave operators  ]{On completeness of modified wave operators for defocusing NLS }
\date{}
\author{Vladimir Georgiev}
\address{Vladimir Georgiev, Dipartimento di Matematica ,  Universit\`a di Pisa \\ Largo B. Pontecorvo 5, 56100 Pisa, Italy and
\\Faculty of Science and Engineering \\ Waseda University \\
 3-4-1, Okubo, Shinjuku-ku, Tokyo 169-8555 \\
Japan and
\\Institute of Mathematics and Informatics, Bulgarian Academy of Sciences, Acad.
Georgi Bonchev Str., Block 8, 1113 Sofia, Bulgaria}

\author{Tohru Ozawa}
\address{Tohru Ozawa, Department of Applied Physics \\ Waseda University \\
 3-4-1, Okubo, Shinjuku-ku, Tokyo 169-8555 \\
Japan }

\begin{abstract}

In this manuscript, we study modified scattering for the nonlinear defocusing Schr\"odinger equation with a critical gauge-invariant nonlinearity of order $1+2/n.$

We address the following question: Given initial data $u_0$ in an appropriate weighted Sobolev space, what is the leading term in the asymptotic behavior of the solution $u(t)$ as $t \to \pm \infty$? More precisely, we seek a final state $u^+$ as $t \to + \infty$ (in a space of type similar to the space of $u_0$) such that
\begin{equation} \label{d1}
\begin{aligned}
& \left\|u(t)- e^{i \Phi^{+}(t)} U(t)u^{+}\right\|_{L^2}=0  \ \ \mbox{as $t \to +\infty$}.
\end{aligned}
    \end{equation}

    This result provides the modified profile of the leading term, where the linear propagator $U(t)$ for the linear Schr\"odinger equation is perturbed by a modified phase function $\Phi^{+}(t).$

    The solution to this problem can be reformulated in terms of the completeness of wave operators. For $n=1$, we obtain affirmative  answer, provided appropriate control on $L^\infty$ norm even for large initial data. For $n = 2$ completeness is established under suitable control of the $L^\infty$ norm of the solution $u(t).$

In the case where no control on the $L^\infty$ norm is available, we establish similar asymptotic behavior by replacing the convergence in  $L^2$ norm in \eqref{d1} with  a weaker convergence.

\end{abstract}

\maketitle

{Keywords:} {Modified Scattering \and Nonlinear Schr\"odinger equation}

{Subject Class:}\ \ {35Q55 \and 35P25 \and 35B40}

\section{Introduction}
Consider the Cauchy problem for the defocusing NLS
\begin{equation}\label{eq.NLSf1}
    \begin{aligned}
    & (i \partial_t + \frac{1}{2}\Delta_x)u = u |u|^{2/n}, \ \ t \geq 0, \\
    & u(0)= u_0 \in \mathcal{H}^{s, \sigma} = H^s(\mathbb{R}^n) \cap \mathcal{F}(H^\sigma(\mathbb{R}^n)),
    \end{aligned}
\end{equation}
where $ u : \mathbb{R} \times \mathbb{R}^n \ni (t,x) \mapsto u(t,x) \in \mathbb{C},$ $\partial_t = \partial/\partial t,$
$\Delta = \partial_1^2 + \cdots +\partial_n^2,$ $ \partial_j = \partial / \partial x_j,$ $H^s$ is the classical Sobolev space, see \eqref{eq.clsob1},
and where $\mathcal{F}$ is the Fourier transform introduced in \eqref{eq.ftr1}.

We are interested in the long-range effects of \eqref{eq.NLSf1}.

The main questions concerning modified scattering are described below.

\begin{enumerate}
    \item[Q1)]
     Given $u^+$ in a space of type $\mathcal{H}^{s,\sigma}$, show that there exists $u_0$ and a global solution $u$ to the NLS so that
    \begin{equation}
\begin{aligned}
& \lim_{t \to +\infty} \left\| u(t)- e^{i \Phi^{+}(t)} U(t)u^{+} \right\|_{L^2}= 0 .
\end{aligned}
    \end{equation}
    Here $ U(t) = e^{i\frac{t}{2} \Delta }$ is the free propagator, while $\Phi^{+}(t)=\Phi^+ (t,x)$ is appropriate real -valued function depending on the final state $u^+.$
    This gives the modified wave operator
\begin{equation}\label{eq.mvo1}
    W_+ (u^+) = u_0 = \lim_{t \to +\infty} U(-t)e^{-i \Phi^{+}(t)}u^+,
\end{equation}
where the limit is taken in $L^2$ sense. Similarly, we can ask if modified wave operator
\begin{equation} \label{eq.mvo2}
    W_- (u^-) = u_0  ,
\end{equation}
such that
 \begin{equation}
\begin{aligned}
& \lim_{t \to -\infty} \left\| u(t)- e^{i \Phi^{-}(t)} U(t)u^{-} \right\|_{L^2}= 0
\end{aligned}
    \end{equation}
exists?

\item[Q2)] Another key question is the completeness of wave operators.
Given $u_0$ in space of type $\mathcal{H}^{\sigma,s},$ suppose that there is a global mild solution $u \in C([0,+\infty);\mathcal{H}^{\sigma,s}). $ One can see for example \cite{O91} for the case $n=1$ and \cite{HN06} for $n=1,2,3$ for the case of initial data in $\mathcal{H}^{0,n/2+}$ and appropriate smallness assumption. Then we are looking for final state $u^+$ as $t \to + \infty$ (in a space of type similar to the space of $u_0$) such that
\begin{equation} \label{decr1}
\begin{aligned}
& \lim_{t \to +\infty}\left\|u(t)- e^{i \Phi^{+}(t)} U(t)u^{+}\right\|_{L^2}=0 .
\end{aligned}
    \end{equation}

In this way we can define
\begin{equation}
    (W_+)^{-1}: u_0 \mapsto u^+
\end{equation}
and in a similar way
\begin{equation}
    (W_-)^{-1}: u_0 \mapsto u^-.
\end{equation}
Further we can introduce the scattering operator
    \begin{equation}
        S =  (W_-)^{-1}W_+: u^+ \mapsto u^-.
    \end{equation}
\end{enumerate}

The defocussing NLS has conservation quantities: the mass
\begin{equation}
    m(u(t)) = \frac{1}{2}\int_{\mathbb{R}^n} |u(t,x)|^2 dx
\end{equation}
and the energy
\begin{equation}
    E(u(t)) = \frac{1}{2}\int_{\mathbb{R}^n} |\nabla u(t,x)|^2 dx + \frac{n}{2n+2} \int_{\mathbb{R}^n} |u(t,x)|^{2/n+2} dx.
\end{equation}

\subsection{Known results}

Recall that more general NLS
\begin{equation}\label{eq.nls93}
    \begin{aligned}
     (i \partial_t + \frac{1}{2}\Delta)u = u(t)|u(t)|^{p-1}, \ \ t \in [0,+\infty)
    \end{aligned}
\end{equation}
is studied intensively in the recent decades. The critical Strauss exponent
\begin{equation}
    \gamma(n) = \frac{(n+2) + \sqrt{n^2+ 12n + 4}}{2n}
\end{equation}
plays crucial role in the proof of existence and completeness of the wave operators. Note that $\gamma(n) > 1+2/n$ so this case corresponds to the short range regime.

The case $p>\gamma(n)$ and large initial data in
\begin{equation}
 \mathcal{H}^{1,1}=   H^1(\mathbb{R}^n) \cap \mathcal{F}(H^1(\mathbb{R}^n))
\end{equation}
is studied in \cite{HT86}, \cite{T85}   and existence and completeness of wave operators is obtained. The nonlinearity with Strauss critical exponent is studied in \cite{C03}, \cite{CW92}, \cite{NO01}, \cite{NONO02}.  The crucial case $p=\gamma(n)$ is studied in \cite{NONO02}.
The precise asymptotics for the case $p>\gamma(n)$ is discussed in \cite{KO05}.

Let us mention that most of the  known  results are treating modified scattering for defocussing critical NLS with $p=1+2/n$ and they  need a kind of smallness assumption.

For the special case $n=1$ it is known from the results in \cite{DZ03} that we have
the $L^\infty$ bound
\begin{equation}
    \|u(t) -  t^{-1/2} \alpha(x) e^{ix^2/4t - i \nu(x)  \log (2 t)}    \|_{L^\infty} \leq C (1+t)^{-1/2-\kappa}, \ \kappa \in (0,1/4),
\end{equation}
where $\alpha \in \mathcal{H}^{1,1}$ is complex - valued and  $\nu(x) >0.$ The result is obtained by using the inverse scattering method working in dimension $n=1$ and exploiting the reflection coefficient $r(x)$ that is assumed to be in the space
\begin{equation}
    \mathcal{H}^{1,1} \cap \left\{r ; \|r\|_{L^\infty} < 1 \right\}.
\end{equation}

This result shows that  one can have solution satisfying the decay property
\begin{equation}\label{Linfc1}
    \|u(t,\cdot)\|_{L^\infty} \lesssim t^{-1/2},
\end{equation}
when  the smallness condition is expressed in terms of explicit bound on the reflection coefficient.

The work \cite{O91} is  the first case when modified wave operators \eqref{eq.mvo1}, \eqref{eq.mvo2} for $ n=1$
are defined even in the case where the right hand side is replaced by
\begin{equation}\label{eq.NLS2}
    \begin{aligned}
    f(|u(t)|^2) = \lambda u(t) |u(t)|^{2/n} + \mu u(t)|u(t)|^{p-1} , \ \ t \in [0,+\infty), p> 1+2/n; \\
    \end{aligned}
\end{equation}
with $\lambda \in \mathbb{R} \backslash\{0\}$ and $\mu \in \mathbb{R}$.  The modified phase function $\Phi^{ \pm}$ and $\Phi_0^{ \pm}$ are defined by
\begin{equation}
\begin{aligned}
& \Phi^{ \pm}(t, x) =\mp \lambda \log |t|\left|\widehat{u^{ \pm}}\left(t^{-1} x\right)\right|^2  \pm
\frac{2\mu}{p-3}
|t|^{-(p-3) / 2}\left|\widehat{u^{ \pm}}\left(t^{-1} x\right)\right|^{p-1} ,\\
& \Phi_0^{ \pm}(t, x)=\mp \lambda \log |t|\left|\widehat{u^{ \pm}}\left(t^{-1} x\right)\right|^2.
\end{aligned}
\end{equation}
The main result in \cite{O91} implies \eqref{eq.mvo1} and \eqref{eq.mvo2} assuming $u^{\pm} \in \mathcal{H}^{1,3}$ with $\left\|\widehat{u^{\pm}}\right\|_{L^\infty}<\varepsilon$. The decay rate in \eqref{decr1} is deduced in \cite{O91} by verifying the decay estimate
\begin{equation} \label{decr2}
\begin{aligned}
& \left\|u(t)- e^{i \Phi^{+}(t)} U(t)u^{+}\right\|_{\mathcal{H}^{1,0}}\lesssim t^{-1+}  \ \ \mbox{as $t \to +\infty$}.
\end{aligned}
    \end{equation}
Later  on the decay result in \cite{O91} has been improved in \cite{C2001} by using the same assumptions on $u^+.$ The cases $n =1,2,3$ are studied in  \cite{GO93}, where the  Dollard decomposition is used and the existence of wave operators is established.
Concerning the Dollard decomposition and modified wave operators we can refer also to \cite{MS16}.

Further, the work \cite{GO93} treats the Cauchy problem for small and sufficiently regular asymptotic states, i.e.  the initial data at $t\to +\infty$ and then the modified scattering is established.

The case of small initial data in the space
$\mathcal{H}^{1,1}$
and $p = 1+2/n$ is treated in   the classical work \cite{HN98AJM}, where the question Q2) is studied and  the  modified scattering is represented by the following asymptotics as $t\to +\infty$
\begin{equation}
    u(t) = (it)^{-n/2} w\left(\frac{x}{t} \right)  e^{i x^2/2t + |w(x/t)|^2 \log t + i \Phi(x/t)}  + O(t^{-n/2-\delta}),
\end{equation}
where $w,\Phi $ are $L^\infty$ functions. Alternative proof for the case $n=1$ can be found in \cite{KP11}.

The work \cite{HN06} deals with existence and completeness of the wave operators for $n=1,2,3$ having less regularity of the initial data $u_0, u^\pm  \in \mathcal{H}^{0,n/2+}.$
More precisely, if $u_{+} \in \mathcal{H}^{0, \alpha}$ and $\left\|\widehat{u_{+}}\right\|_{{L}^{\infty}}=\varepsilon$, where $\varepsilon$ is sufficiently small and $\frac{n}{2}<\alpha<\min \left(n, 2,1+\frac{2}{n}\right)$, then there exists a unique global solution $u$ to the NLS satisfying
\begin{equation}
    \begin{aligned}
        \left\|u(t)-\frac{e^{\frac{i | x|^2}{2 t}}}{(i t)^{\frac{n}{2}} } \widehat{u_{+}}\left(\frac{\cdot}{t}\right) \exp \left(-i \lambda \left|\widehat{u_{+}}\left(\frac{\cdot}{t}\right)\right|^{\frac{2}{n}} \log t\right)\right\|_{\mathcal{H}^{\delta,0}} \leq \frac{C}{ t^{\frac{\beta-\delta}{2}+\mu}},
    \end{aligned}
\end{equation}
where $\frac{n}{2}<\beta<\alpha$
and  $0 \leq \delta \leq \beta, \mu>0$.

Further the operator $W_{-}^{-1}$ is constructed in \cite{HN06}, so that
$$
W_{-}^{-1}: u_0 \in \mathcal{H}^{0, \beta} \mapsto u_{-} \in \mathcal{H}^{0, \delta}
$$
where $\frac{n}{2}<\delta<\beta<\min \left(2,1+\frac{2}{n}\right)$. Therefore we have the modified scattering operator
$$
S_{+}=W_{-}^{-1} W_{+}: \mathcal{H}^{0, \alpha} \mapsto \mathcal{H}^{0, \delta},
$$
where $\frac{n}{2}<\delta<\alpha<\min \left(n, 2,1+\frac{2}{n}\right)$, provided that the norm $\left\|u_{+}\right\|_{\mathcal{H}^{0, \alpha}}$ is sufficiently small.

The work \cite{LS06} gives another interesting result for 1d long range scattering of NLS.
The authors  make the  ansatz
$$
u(t, x)=t^{-1 / 2} \mathrm{e}^{\mathrm{i} x^2 / 4 t} V(t, y), \quad s=t, \quad y=x / t
$$
Then the NLS problem is transformed into

\begin{equation}
    \mathrm{i} \partial_s V-\beta s^{-1}|V|^2 V + s^{-2} \partial_y^2 V=0
\end{equation}
and then $V$ is represented as $V = a e^{i\phi}.$
Then the  authors prove the existence and  completeness of wave operators under smallness assumption of the smooth initial data.

In \cite{MVH22} modified 1d scattering is established under smallness assumptions in $\mathcal{H}^{1,1}.$

The asymptotic expansion with respect to the initial small data size  is given in \cite{C24}.

On the other hand, if initial data are in $H^1$ only, then  the following estimate is known (see \cite{PV09}, \cite{CGT09}) for the 1d case
\begin{equation}
    \|u\|_{L^6_{(1,+\infty)}L^6_x}^2 + \left\| \nabla_x (|u|^2) \right\|_{L^2_{(1,+\infty)}L^2_x} \lesssim \sup_{t \geq 1}\|u(t)\|_{L^2} \|u(t)\|_{\dot{H}^{1/2}}.
\end{equation}

Using  smallness assumption
\begin{equation} \label{eq.sm11}
    \left\|u_0 \right\|_{\mathcal{H}^{0,1}} \leq \varepsilon << 1,
\end{equation}
Ifrim and Tataru in \cite{IT24} have obtained the smallness of $L^\infty $ norm and the expected decay $t^{-1/2}.$ Note that the smallness assumption \eqref{eq.sm11} can be weaken to  the smallness assumption of the $\mathcal{H}^{0,1/2+}$ and this is exactly the case treated in  \cite{HN06}.  The decay of $L^\infty$ norm is studied in \cite{HT85} under the assumption that $p \geq 1+4/n.$
In the case $n=1$ the small weighted Sobolev norms on initial data (actually exponential decay in $x$ is assumed) the modified scattering is deduced in \cite{HKN98}.

In the case of more general cubic nonlinearities and $n=1$ the existence of wave operators are established in \cite{HN09} by requiring
$u_0 \in \mathcal{H}^{2,2}.$

\subsection{Purpose of the work}
The above short overview on existing results shows that most of the results require smallness of the initial data

Our goal is  to discuss Q2) without smallness on the initial data for $n=1,2$ or $n\geq 3$
and assuming $L^\infty $ control of the solution.

\begin{remark}
    Let us mention that in the case $n=1$ the question Q1) is treated in a  preprint \cite{KM23}.
\end{remark}

\section{Main results}

\begin{thm}\label{MT1} Assume $n \leq 2$ and let $u(t)$ be a global solution to \eqref{eq.NLSf1} satisfying
    \begin{equation}\label{linf5}
        \|u(t,\cdot)\|_{L^\infty} \lesssim t^{-n/2}, \ \ t \geq 1
    \end{equation}
    with  $u(0)=u_0 \in \mathcal{F}(H^1).$ Then there exists a unique $u^+ \in L^2$ so that
    \begin{equation}\label{mwoe60}
        \lim_{t \to + \infty} \|  u(t) - e^{-i \Phi_t(u)} U(t)u^+ \|_{L^2} = 0,
    \end{equation}
    where
    \begin{equation}\label{eq.phd}
     \Phi_t(u) (x) = \int_1^t |u(\tau, \tau x/t)|^{2/n} d \tau.
    \end{equation}
\end{thm}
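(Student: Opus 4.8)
The plan is to observe that, since multiplication by the real phase $e^{i\Phi_t(u)}$ and the free group $U(-t)$ are both isometries of $L^2$, the claim \eqref{mwoe60} is exactly the statement that the single curve
\[
z(t):=U(-t)\,e^{i\Phi_t(u)}\,u(t)
\]
converges strongly in $L^2$; indeed $\|u(t)-e^{-i\Phi_t(u)}U(t)u^+\|_{L^2}=\|z(t)-u^+\|_{L^2}$. Thus it suffices to produce $u^+=\lim_{t\to+\infty}z(t)$, and uniqueness is automatic, because if two final states were admissible their difference would satisfy $\|U(t)(u^+_1-u^+_2)\|_{L^2}\to0$, forcing $u^+_1=u^+_2$.

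To analyze $z(t)$ I would pass to the profile $w(t)=U(-t)u(t)$ and use the factorization $U(t)=M(t)\mathcal{D}(t)\mathcal{F}M(t)$, where $M(t)=e^{i|x|^2/(2t)}$ and $(\mathcal{D}(t)f)(x)=(it)^{-n/2}f(x/t)$. The hypothesis \eqref{linf5} then reads $\|\widehat{w}(t)\|_{L^\infty}\lesssim1$, and inserting $(i\partial_t+\tfrac12\Delta)u=|u|^{2/n}u$ yields the profile equation
\[
i\,\partial_t\widehat{w}(t,\xi)=t^{-1}|\widehat{w}(t,\xi)|^{2/n}\widehat{w}(t,\xi)+\mathcal{R}(t,\xi),
\qquad
\mathcal{R}:=e^{it|\xi|^2/2}\,\widehat{|u|^{2/n}u}-t^{-1}|\widehat{w}|^{2/n}\widehat{w},
\]
the remainder $\mathcal{R}$ measuring the error of replacing $u$ by $M(t)\mathcal{D}(t)\widehat{w}$ in the nonlinearity and of undoing the factorization. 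Setting $\psi(t,\xi)=\int_1^t\tau^{-1}|\widehat{w}(\tau,\xi)|^{2/n}\,d\tau$ and $\widehat{W}:=e^{i\psi}\widehat{w}$ absorbs the leading term exactly, leaving the clean identity $\partial_t\widehat{W}=-i\,e^{i\psi}\mathcal{R}$. Hence $\widehat{W}(t)$ converges in $L^2$ as soon as $\int_1^\infty\|\mathcal{R}(t)\|_{L^2}\,dt<\infty$.

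To return to the physical phase $\Phi_t(u)$, I would run the factorization backwards and use $M(t)\to1$ to get $\widehat{z(t)}\approx e^{i\Theta(t,\cdot)}\widehat{W}(t)$ with
\[
\Theta(t,\xi)=\Phi_t(u)(t\xi)-\psi(t,\xi)=\int_1^t\Big(|u(\tau,\tau\xi)|^{2/n}-\tau^{-1}|\widehat{w}(\tau,\xi)|^{2/n}\Big)\,d\tau,
\]
whose integrand is precisely the pointwise error in $|u(\tau,\tau\xi)|^{2/n}\approx\tau^{-1}|\widehat{w}(\tau,\xi)|^{2/n}$; controlling it in $L^2_\xi$ by the same remainder-type bound shows $\Theta(t,\cdot)$ converges, so $e^{i\Theta(t)}\widehat{W}(t)\to e^{i\Theta_\infty}\widehat v$ and $u^+:=\mathcal{F}^{-1}(e^{i\Theta_\infty}\widehat v)$ is the final state. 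All the approximations rest on estimates of the type $\|(M(t)-1)w\|_{L^2}\lesssim t^{-1/2}\|(x+it\nabla)u\|_{L^2}$, each costing one weight, i.e. one power of $J(t)=x+it\nabla$ (equivalently one $\xi$-derivative of $\widehat{w}$).

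The hard part is the remainder estimate without any smallness. The natural bound is $\|\mathcal{R}(t)\|_{L^2}\lesssim t^{-3/2}\big(1+\|J(t)u(t)\|_{L^2}\big)$, which is time-integrable only if the weighted norm $\|J(t)u(t)\|_{L^2}=\|\nabla_\xi\widehat{w}(t)\|_{L^2}$ grows slower than $t^{1/2}$; a crude Gronwall estimate on the profile equation only gives a power $t^{c}$ with $c$ proportional to $\|\widehat{w}\|_{L^\infty}^{2/n}$, which is not small. This is exactly where $n\le2$ and the full force of \eqref{linf5} must enter: for $n\le2$ the nonlinearity $|u|^{2/n}u$ is regular enough that the amplitude $|\widehat{w}|$ is almost conserved and the genuine growth is confined to the phase, so a bootstrap separating $\|\nabla|\widehat{w}|\,\|_{L^2}$ (to be kept bounded) from $\||\widehat{w}|\,\nabla\psi\|_{L^2}$ (allowed to grow logarithmically) should upgrade the estimate to $\|J(t)u(t)\|_{L^2}\lesssim\log t$ and close the argument. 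An alternative that sidesteps the strong remainder bound is to prove only weak $L^2$-convergence of $\widehat{W}(t)$ by testing $\mathcal{R}$ against smooth functions—so that all derivatives fall on the test function and only $\|u(t)\|_{L^\infty}$ and the conserved mass appear—and then to upgrade weak to strong convergence using the exact identity $\|\widehat{W}(t)\|_{L^2}=\|u(t)\|_{L^2}=\|u_0\|_{L^2}$; the delicate point in this route is securing that this norm is not lost in the weak limit.
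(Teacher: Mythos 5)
Your reduction of \eqref{mwoe60} to strong $L^2$ convergence of $z(t)=U(-t)e^{i\Phi_t(u)}u(t)$, and the uniqueness argument, are correct and match the paper. But the core of your proposal has a genuine gap, and it sits exactly where you flag it. You set up the standard modified-scattering scheme: profile phase $\psi(t,\xi)=\int_1^t\tau^{-1}|\widehat w(\tau,\xi)|^{2/n}d\tau$, remainder $\mathcal R$, and the requirement $\int_1^\infty\|\mathcal R(t)\|_{L^2}\,dt<\infty$. For large data the only bounds one can actually prove are $\|J(t)u(t)\|_{L^2}\lesssim t^{1/2}$ and $\int_1^\infty \tau^{-2}\|J(\tau)u(\tau)\|_{L^2}^2\,d\tau<\infty$; these follow from the conservation of the pseudoconformal energy for the \emph{defocusing critical} nonlinearity (Proposition \ref{lem.mapes}), a tool your proposal never invokes. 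With $\|J(t)u(t)\|_{L^2}\lesssim t^{1/2}$ your remainder bound gives only $\|\mathcal R(t)\|_{L^2}\lesssim t^{-1}$, which is not integrable, and Cauchy--Schwarz against the dichotomy integral fails too, since $\int_1^\infty \tau^{-1}d\tau=\infty$. Your proposed repair --- a bootstrap yielding $\|J(t)u(t)\|_{L^2}\lesssim\log t$ --- is unsubstantiated: no such bound is known or proved for large data (it is precisely the kind of estimate that in the literature requires smallness of $\|\widehat{u_+}\|_{L^\infty}$), and the paper never uses it. Your fallback (weak convergence plus mass conservation) founders on the point you yourself concede: weak convergence only gives $\|u_+\|_{L^2}\le\liminf\|v(t)\|_{L^2}$, and there is no mechanism here forcing equality of norms; indeed the paper's Theorems \ref{th5} and \ref{th6} show that when one argues by testing against smooth functions, the conclusion degrades to convergence in $\mathcal F(\dot B^{-1}_{2,\infty})$-type topologies, not $L^2$.

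The paper closes the gap by a structurally different argument that never produces a remainder $\mathcal R$ at all. Because $\Phi_t(u)$ in \eqref{eq.phd} is built from $u$ itself (not from the profile $\widehat w$), the commutation identity of Lemma \ref{mle60}, $\mathcal F M(t)U(-t)(gu(t))=\tilde g\,\mathcal F M(t)U(-t)u(t)$ with $\tilde g(x)=g(xt)$, makes the nonlinear term cancel \emph{exactly} against $\partial_t\tilde\Phi_t$, leaving the closed identity of Corollary \ref{cor3.1}:
\begin{equation}
i\,\partial_t v(t)=\mathcal F^{-1}e^{i\tilde\Phi_t}\,\frac{(-\Delta)}{2t^2}\,\mathcal F M(t)U(-t)u(t),
\qquad v(t)=M(t)U(-t)e^{i\Phi_t(u)}u(t).
\end{equation}
The Cauchy property is then proved \emph{bilinearly}: by mass conservation, $\tfrac12\|v(t)-v(s)\|_{L^2}^2=-\mathrm{Re}\int_s^t(v'(\tau)|v(s))\,d\tau$; one gradient is integrated by parts off the $\Delta$, the factor at the fixed time $s$ contributes $\|J(s)u(s)\|_{L^2}\lesssim s^{1/2}$ (via \eqref{eq.jco6}), and this loss is exactly compensated by $\bigl(\int_s^t\tau^{-2}d\tau\bigr)^{1/2}\lesssim s^{-1/2}$, while the tail smallness of \eqref{eq.jco29} makes everything $o(1)$ as $s\to\infty$. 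The term with the gradient falling on the phase is where the hypothesis \eqref{linf5} and the restriction $n\le2$ enter (to bound $|u|^{2/n-1}\lesssim \sigma^{-1+n/2}$), together with Hardy's inequality. In short: the two ingredients you are missing are the exact phase cancellation (eliminating $\mathcal R$) and the pseudoconformal energy identity (supplying, with no smallness, exactly the borderline bounds that make the bilinear estimate close).
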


\begin{remark}
    From the results, treating 1d case  in \cite{KM25}, we know that the modified wave operators
    \begin{equation}
        \begin{aligned}
            W_+:  u^+ \in \mathcal{F}(H^{1+\varepsilon})   \mapsto u_0 = \lim_{t \to +\infty} U(-t)e^{-i \Phi^{+}(t)}u^+ \in \mathcal{F}(H^{1}), \\
            W_-:  u^- \in \mathcal{F}(H^{1+\varepsilon})   \mapsto u_0 = \lim_{t \to +\infty} U(-t)e^{-i \Phi^{-}(t)}u^- \in \mathcal{F}(H^{1})\\
        \end{aligned}
    \end{equation}
    are well defined and any $u_0 \in \mathrm{Ran} (W_-)$ satisfies the decay assumption \eqref{linf5}. From this fact and from Theorem \ref{MT1} we can conclude that
    \begin{itemize}
        \item the modified  wave operators are complete,

        \item the scattering operator $S = (W_+)^{-1}W_-$ is well - defined.
    \end{itemize}

\end{remark}

\begin{thm}\label{MT2} Under the assumptions of Theorem \ref{MT1} if in addition $u_0 \in H^1,$ then $u^+ \in H^1$ and for any $\varphi \in H^1$ we have
    \begin{equation}\label{mwoe60m}
        \lim_{t \to + \infty} \left( \nabla \left( U(-t)e^{i\Phi_t(u)}u(t)  -u^+\right) | \nabla \varphi \right) = 0,
    \end{equation}
    where $(\cdot | \cdot)$ is  the inner product in $L^2$ defined by \eqref{ip1}.
\end{thm}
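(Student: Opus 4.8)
The plan is to upgrade the strong $L^2$ convergence furnished by Theorem \ref{MT1} to a statement about gradients by first establishing a uniform-in-time $H^1$ bound on the modified profile and then invoking soft functional-analytic arguments. To set up, I would reformulate the conclusion of Theorem \ref{MT1}: put
$$v(t) := U(-t)\,e^{i\Phi_t(u)}u(t).$$
Since $e^{i\Phi_t(u)}$ is multiplication by a unimodular function and $U(-t)$ is unitary on $L^2$, applying $e^{i\Phi_t(u)}$ and then $U(-t)$ to \eqref{mwoe60} gives
$$\|v(t)-u^+\|_{L^2}=\|u(t)-e^{-i\Phi_t(u)}U(t)u^+\|_{L^2}\xrightarrow[t\to+\infty]{}0,$$
so $v(t)\to u^+$ strongly in $L^2$. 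Because $U(-t)$ commutes with $\nabla$ and is unitary,
$$\|\nabla v(t)\|_{L^2}=\|\nabla\bigl(e^{i\Phi_t(u)}u(t)\bigr)\|_{L^2}=\|\nabla u(t)+i\,u(t)\,\nabla\Phi_t(u)\|_{L^2},$$
and the whole theorem is reduced to the uniform bound $\sup_{t\ge1}\|\nabla v(t)\|_{L^2}<\infty$.

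The first term is controlled by energy conservation: since the equation is defocusing, $\tfrac12\|\nabla u(t)\|_{L^2}^2\le E(u(t))=E(u_0)<\infty$ whenever $u_0\in H^1$ (note $H^1\hookrightarrow L^{2/n+2}$ for $n\le2$, so $E(u_0)$ is finite), giving $\|\nabla u(t)\|_{L^2}\le C$ for all $t$. The genuine work, which I expect to be the main obstacle, is controlling $\|u(t)\nabla\Phi_t(u)\|_{L^2}$. Differentiating \eqref{eq.phd} gives $\nabla\Phi_t(u)(x)=\int_1^t\tfrac{\tau}{t}\,(\nabla|u|^{2/n})(\tau,\tau x/t)\,d\tau$, and I would use the pointwise (Kato-type) bound $|\nabla|u|^{2/n}|\le\tfrac2n|u|^{2/n-1}|\nabla u|$, valid a.e. for $H^1$ functions precisely when $2/n-1\ge0$, i.e. $n\le2$. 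By Minkowski's integral inequality, the $L^\infty$ decay $\|u(t)\|_{L^\infty}\lesssim t^{-n/2}$ applied to the factor $u(t,x)$, the change of variables $y=\tau x/t$, and $\||u|^{2/n-1}|\nabla u|\|_{L^2}\le\|u(\tau)\|_{L^\infty}^{2/n-1}\|\nabla u(\tau)\|_{L^2}\lesssim\tau^{n/2-1}$, one finds that the $\tau$-integrand collapses to size $t^{-1}$, whence
$$\|u(t)\nabla\Phi_t(u)\|_{L^2}\lesssim\int_1^t t^{-1}\,d\tau\lesssim1$$
uniformly in $t\ge1$. The hypothesis $n\le2$ enters twice here (for the Kato bound and for the exponent bookkeeping), and combining the two estimates yields $\sup_{t\ge1}\|\nabla v(t)\|_{L^2}<\infty$; in particular $v(t)\in H^1$ for each $t$.

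With the uniform bound in hand, the remainder is soft. Since $v(t)\to u^+$ in $L^2$ and $\{\nabla v(t)\}$ is bounded in $L^2$, any weak subsequential limit of $\nabla v(t)$ must coincide with the distributional gradient $\nabla u^+$; hence $\nabla u^+\in L^2$, i.e. $u^+\in H^1$. For the convergence statement I would first take $\varphi\in C_c^\infty$: since $v(t)-u^+\in H^1$, integration by parts gives $\bigl(\nabla(v(t)-u^+)\mid\nabla\varphi\bigr)=-\bigl(v(t)-u^+\mid\Delta\varphi\bigr)\to0$ by the strong $L^2$ convergence of $v(t)$ and $\Delta\varphi\in L^2$. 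Finally, using the uniform bound $\|\nabla(v(t)-u^+)\|_{L^2}\le C$ together with the density of $C_c^\infty$ in $H^1$, a standard $\varepsilon/3$ approximation argument extends the limit \eqref{mwoe60m} from $C_c^\infty$ to all $\varphi\in H^1$. Thus the only substantial ingredient is the uniform gradient bound of the previous paragraph, and everything else is functional-analytic bookkeeping.
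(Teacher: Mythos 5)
Your proposal is correct, and its skeleton coincides with the paper's: both split $\nabla\bigl(e^{i\Phi_t(u)}u(t)\bigr)$ into $\nabla u(t)$ plus $i\,u(t)\nabla\Phi_t(u)$, bound the second piece by $\|u(t)\|_{L^\infty}\|\nabla\Phi_t(u)\|_{L^2}$ using the decay hypothesis \eqref{linf5}, and then conclude by soft arguments (your weak-compactness identification of $\nabla u^+$ and $C_c^\infty$-density step are the same mechanism as the paper's duality estimate \eqref{eq.f1} and its $H^2$-approximation of $\varphi$). The one genuine difference is the ingredient used to control $\nabla\Phi_t(u)$: you apply the pointwise bound $|\nabla|u|^{2/n}|\le\tfrac{2}{n}|u|^{2/n-1}|\nabla u|$ and invoke conservation of energy, $\|\nabla u(\tau)\|_{L^2}\lesssim 1$, which is available precisely because Theorem \ref{MT2} adds the hypothesis $u_0\in H^1$; this yields the uniform bound $\|u(t)\|_{L^\infty}\|\nabla\Phi_t(u)\|_{L^2}\lesssim 1$. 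The paper instead rewrites $\mathrm{Re}(\overline{u}\,\partial_j u)$ as $\tau^{-1}\mathrm{Im}(\overline{u}\,J_j(\tau)u)$ and uses the pseudoconformal estimate of Proposition \ref{lem.mapes}, $\|J_j(\tau)u(\tau)\|_{L^2}\lesssim\tau^{1/2}$, which gives the stronger, decaying bound $\|\partial_j\Phi_t\|_{L^2}\|u(t)\|_{L^\infty}\lesssim t^{-1/2}$. For the qualitative statement \eqref{mwoe60m} your uniform bound suffices, so nothing is lost; what the paper's $J$-based estimate buys is a quantitative rate for the phase-gradient term (and it only needs $u_0\in\mathcal{F}(H^1)$ for that particular estimate, the $H^1$ assumption entering elsewhere), whereas your route is more elementary in that it avoids re-invoking the pseudoconformal machinery at this stage.
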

\begin{remark}
    The limit in \eqref{mwoe60m} and  \eqref{mwoe60} show that the weak limit in   $H^1$ of
$$ U(-t)e^{i\Phi_t(u)}u(t)  -u^+  $$
is $0.$

\end{remark}

Let us give an outline of the proofs of these Theorems. We use the Dollard decomposition (see \cite{GO93}, \cite{MS16})
of  the Schr\"odinger group $U(t)=e^{i\frac{t}{2}\Delta }$ represented by
\begin{equation}
   (U(t) f)(x)= \int_{\mathbb{R}^n} K_t(x-y) f(y)dy,
\end{equation}
where
\begin{equation}
  K_t(x) = (2\pi i t)^{-n/2} e^{ix^2/(2t)}, t\neq 0.
\end{equation}

The Dollard decomposition has the form
\begin{equation}\label{eq.doll5aa}
\begin{aligned}
   & U(t) = M(t) D(t) \mathcal{F} M(t),
\end{aligned}
\end{equation}
where
\begin{equation}\label{eq.MD1}
\begin{aligned}
   & M(t) (f)(x) = e^{ix^2/(2t)} f(x), \ \ M(t)^{-1} = M(-t), \\
    &D(t)\, (f) (x) = (it)^{-n/2} f \left(\frac{x}{t} \right), \ D(t)^{-1}\, (f)(x) = (it)^{n/2} f \left(x t \right).
\end{aligned}
\end{equation}
The proofs then rely on the decay assumption and the pseudoconformal energy. To establish estimates for the pseudoconformal energy, we apply the pseudoconformal transformation.
\begin{equation}\label{eq.PCT11}
    u(t,x) = (1+t)^{-n/2} v\left(\frac{t}{1+t},\frac{x}{1+t} \right)  e^{\frac{ix^2}{2(1+t)}},
\end{equation}
well - defined for $t\geq 0, x\in  \mathbb{R}^n.$ Then $v \in C([0,1);\mathcal{H}^{1,1}) $  is a solution to the Cauchy problem
\begin{equation}\label{eq.PSCv1}
    \begin{aligned}
    & (i \partial_t + \frac{1}{2}\Delta_x)v = \frac{v |v|^{2/n}}{1-t} , \ \ t \in [ 0, 1); \\
    & v(0)= u_0 \in \mathcal{H}^{1,1}.
    \end{aligned}
\end{equation}

In the case $n=1$ we can control the $L^\infty$ norm and this is our next result.

\begin{thm}\label{T.1}
    For any $R>0$ there exists $C(R)$ so that for any initial data satisfying
    \begin{equation}
        \begin{aligned}
         &   u_0 \in \mathcal{H}^{1,1} , \ \ \|u_0\|_{\mathcal{H}^{1,1} } \leq R
        \end{aligned}
    \end{equation}
    the Cauchy problem
    \begin{equation}\label{eq.NLSn1}
    \begin{aligned}
    & (i \partial_t + \frac{1}{2}\Delta_x)u = u |u|^2, \ \ t \geq 0; \\
    & u(0)= u_0 \in  \mathcal{H}^{1,1},
    \end{aligned}
\end{equation}
    has a unique global solution $u \in C([0,+\infty);\mathcal{H}^{1,1})$ such that
    \begin{equation}\label{13}
        \|u(t,\cdot)\|_{L^\infty}  \leq C(R)\ t^{-1/3}, \ \ t \geq 1.
    \end{equation}
\end{thm}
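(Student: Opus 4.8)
The plan is to establish global well-posedness in $\mathcal H^{1,1}$ first, and then to read off the decay rate \eqref{13} from a monotonicity property of the pseudoconformal energy of the rescaled solution $v$ of \eqref{eq.PSCv1}, for which the transformation \eqref{eq.PCT11} is tailor-made.

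For global well-posedness I would proceed as follows. Since $n=1$, the embedding $H^1(\mathbb R)\hookrightarrow L^\infty$ makes $u\mapsto u|u|^2$ locally Lipschitz on $H^1$, so a contraction argument gives a unique local solution $u\in C([0,T);H^1)$ with $T$ depending only on $\|u_0\|_{H^1}$. Conservation of the mass and of the energy $E(u)$, together with the defocusing sign, yields the uniform bound $\|u(t)\|_{H^1}^2\lesssim m(u_0)+E(u_0)$, so the solution is global in $H^1$. To propagate the weight I would use the Galilean operator $J(t)=x+it\partial_x=M(t)\,(it\partial_x)\,M(-t)$ with $M$ as in \eqref{eq.MD1}: it commutes with $i\partial_t+\tfrac12\Delta$ and obeys the pointwise bound $|J(u|u|^2)|\le 3|u|^2|Ju|$, whence $\tfrac{d}{dt}\|Ju\|_{L^2}\le 3\|u\|_{L^\infty}^2\|Ju\|_{L^2}$. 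As $\|u\|_{L^\infty}\lesssim\|u\|_{H^1}$ is already bounded, Gronwall keeps $\|Ju(t)\|_{L^2}$ finite; since $\|xu(t)\|_{L^2}\le\|Ju(t)\|_{L^2}+t\|\partial_x u(t)\|_{L^2}$, this gives $u\in C([0,\infty);\mathcal H^{1,1})$, equivalently $v\in C([0,1);\mathcal H^{1,1})$.

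The decay \eqref{13} is the heart of the matter, and I would work entirely with $v$. The mass $\|v(t)\|_{L^2}=\|u_0\|_{L^2}$ is conserved, and the crucial computation is the exact identity
\[ \frac{d}{dt}\|\partial_x v(t)\|_{L^2}^2=-\frac{1}{1-t}\,\frac{d}{dt}\|v(t)\|_{L^4}^4, \]
obtained by pairing \eqref{eq.PSCv1} with $\Delta\bar v$ and integrating by parts (the kinetic self-interaction is purely imaginary and drops, and the surviving term is precisely the one that produces $\tfrac{d}{dt}\|v\|_{L^4}^4$). Consequently
\[ F(t):=(1-t)\,\|\partial_x v(t)\|_{L^2}^2+\|v(t)\|_{L^4}^4 \]
satisfies $F'(t)=-\|\partial_x v(t)\|_{L^2}^2\le 0$, so $F$ is nonincreasing and $F(t)\le F(0)=\|\partial_x u_0\|_{L^2}^2+\|u_0\|_{L^4}^4\le C(R)$. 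Since both summands are nonnegative, this gives \emph{simultaneously} the uniform bound $\|v(t)\|_{L^4}^4\le C(R)$ and the controlled blow-up $\|\partial_x v(t)\|_{L^2}^2\le C(R)/(1-t)$. Feeding these into the one-dimensional Gagliardo--Nirenberg inequality $\|v\|_{L^\infty}^3\le 3\|v\|_{L^4}^2\|\partial_x v\|_{L^2}$ yields $\|v(t)\|_{L^\infty}\le C(R)\,(1-t)^{-1/6}$. Reading off \eqref{eq.PCT11} at $s=t/(1+t)$, where $\|u(t)\|_{L^\infty}=(1+t)^{-1/2}\|v(s)\|_{L^\infty}$ and $1-s=(1+t)^{-1}$, gives $\|u(t)\|_{L^\infty}\le C(R)(1+t)^{-1/2}(1+t)^{1/6}=C(R)(1+t)^{-1/3}\le C(R)\,t^{-1/3}$ for $t\ge 1$, which is \eqref{13}.

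The main obstacle is the exact monotonicity $F'=-\|\partial_x v\|_{L^2}^2\le 0$: the delicate point is the algebraic cancellation that makes $\tfrac{d}{dt}\|\partial_x v\|_{L^2}^2$ \emph{equal} to $-\tfrac{1}{1-t}\tfrac{d}{dt}\|v\|_{L^4}^4$ rather than merely comparable, and the justification of the integrations by parts at the $\mathcal H^{1,1}$ level (handled by approximating $u_0$ by Schwartz data and passing to the limit). It is worth emphasizing what drives the exponent: the naive estimate $\|v\|_{L^\infty}\lesssim\|v\|_{L^2}^{1/2}\|\partial_x v\|_{L^2}^{1/2}$ together with $\|\partial_x v\|_{L^2}\lesssim(1-t)^{-1/2}$ would only give $t^{-1/4}$, and it is precisely the \emph{uniform} control of $\|v\|_{L^4}$ extracted from $F$ that, through the interpolation with exponent $2/3$, upgrades this to the stated $t^{-1/3}$.
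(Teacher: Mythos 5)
Your proof is correct and is essentially the paper's own argument: your monotone quantity $F$ is exactly the pseudoconformal energy of Lemma \ref{lem.L1.1} (identity \eqref{eq.ee5}, whose coefficient $\tfrac{2n}{n+1}$ equals $1$ for $n=1$, equivalently \eqref{CPCE1} in the $u$-variables), and your Gagliardo--Nirenberg step $\|v\|_{L^\infty}^3\le 3\|v\|_{L^4}^2\|\partial_x v\|_{L^2}$ applied to $v$ coincides, via the correspondences \eqref{eq.mrel21}, with the paper's application of the same inequality to $M(-t)u(t)$ using $\|u(t)\|_{L^4}\lesssim t^{-1/4}$ and $t^{-1}\|J(t)u(t)\|_{L^2}\lesssim t^{-1/2}$. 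The only point to fix is cosmetic: $F(0)=\|\partial_x v_0\|_{L^2}^2+\|u_0\|_{L^4}^4$ with $v_0=u_0e^{-ix^2/2}$ as in \eqref{eqv0}, not $\|\partial_x u_0\|_{L^2}^2+\|u_0\|_{L^4}^4$, but since $\|\partial_x v_0\|_{L^2}\le\|\partial_x u_0\|_{L^2}+\|xu_0\|_{L^2}\le C(R)$ the bound $F(0)\le C(R)$ stands.
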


In the case $n=2$ we can control the $L^\infty$ norm under the assumption that initial data are radial.

\begin{thm}\label{T.2}
    For $n=2$ any $R>0$ there exists $C(R)$ so that for any initial data satisfying
    \begin{equation}
        \begin{aligned}\label{rad}
         &   u_0 \in \mathcal{H}^{1,1}_{rad}, \  \|u_0\|_{\mathcal{H}^{1,1}_{rad}}  \leq R
        \end{aligned}
    \end{equation}
    the Cauchy problem
    \begin{equation}\label{eq.NLSn2}
    \begin{aligned}
    & (i \partial_t + \frac{1}{2}\Delta_x)u = u |u|, \ \ t \geq 0; \\
    & u(0)= u_0 \in   \mathcal{H}^{1,1}_{rad},
    \end{aligned}
\end{equation}
    has a unique global solution $u \in C([0,+\infty);\mathcal{H}^{1,1}_{rad})$ such that
    \begin{equation}\label{14}
        |u(t,x)|  \leq C(R) \varepsilon^{-1/2} |x|^{-\varepsilon} t^{-(1-\varepsilon)/2}
    \end{equation}
    for any $x \in \mathbb{R}^2 \setminus \{0\},$ any $t>1$ and for any $\varepsilon \in (0,1/2].$
\end{thm}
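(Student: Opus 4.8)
\emph{Strategy.} The global existence and uniqueness of $u\in C([0,+\infty);\mathcal{H}^{1,1}_{rad})$ is classical: for $n=2$ the nonlinearity $u|u|$ is mass--subcritical ($p=2<1+4/n=3$) and defocusing, so conservation of the mass $m(u)$ and the energy $E(u)$ yields a uniform bound $\|u(t)\|_{H^1}\le C(R)$ together with $\|u(t)\|_{L^2}=\|u_0\|_{L^2}$, while radial symmetry is preserved by uniqueness. The content of the theorem is therefore the pointwise bound \eqref{14}, which I would reduce to two ingredients. Put $\tau=1+t$ and let $w=w(t)=M(-\tau)u(t)$ with $M$ as in \eqref{eq.MD1}. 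Then $|w|=|u|$ and $\|w\|_{L^2}=\|u(t)\|_{L^2}=\|u_0\|_{L^2}$, the gradient $\nabla w$ is the Dollard (Galilean) gradient of $u$, and $w$ is radial because $M(-\tau)$ is. Thus \eqref{14} will follow from a bound on $\|\nabla w\|_{L^2}$ together with a radial Strauss--type weighted Sobolev inequality.

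\emph{Pseudoconformal energy.} To bound $\|\nabla w\|_{L^2}$ I would pass to the variable $v$ of \eqref{eq.PCT11}, solving \eqref{eq.PSCv1} with time variable denoted $s\in[0,1)$. Pairing \eqref{eq.PSCv1} with $\overline{\partial_s v}$, taking real parts, and using $\mathrm{Re}\,\bigl(v|v|^{2/n}\overline{\partial_s v}\bigr)=\tfrac{n}{2n+2}\,\partial_s|v|^{2/n+2}$ gives
\begin{equation}
\frac14\,\partial_s\|\nabla v\|_{L^2}^2+\frac{1}{1-s}\,\frac{n}{2n+2}\,\partial_s\|v\|_{L^{2/n+2}}^{2/n+2}=0 .
\end{equation}
Hence the modified energy $\mathcal{E}(s)=\tfrac14\|\nabla v\|_{L^2}^2+\tfrac{1}{1-s}\tfrac{n}{2n+2}\|v\|_{L^{2/n+2}}^{2/n+2}$ satisfies $\mathcal{E}'(s)=\tfrac{n}{2n+2}(1-s)^{-2}\|v\|_{L^{2/n+2}}^{2/n+2}\ge0$, and since its second term controls $\tfrac{n}{2n+2}\|v\|_{L^{2/n+2}}^{2/n+2}\le(1-s)\,\mathcal{E}(s)$, one obtains $\mathcal{E}'(s)\le(1-s)^{-1}\mathcal{E}(s)$. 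Gronwall then yields $\mathcal{E}(s)\le\mathcal{E}(0)(1-s)^{-1}$, with $\mathcal{E}(0)\le C(R)$ by $H^1(\mathbb{R}^2)\hookrightarrow L^{3}$; in particular $\|\nabla v(s)\|_{L^2}\le C(R)(1-s)^{-1/2}$. Since $M(-\tau)u(t)=\tau^{-n/2}v\bigl(s,\cdot/\tau\bigr)$ with $s=t/(1+t)$ and $1-s=\tau^{-1}$, scaling gives $\|\nabla w\|_{L^2}=\tau^{-1}\|\nabla v(s)\|_{L^2}\le C(R)\,\tau^{-1/2}$.

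\emph{Radial weighted Sobolev and conclusion.} It remains to prove, for radial $w\in H^1(\mathbb{R}^2)$ and $\varepsilon\in(0,1/2]$,
\begin{equation}\label{plan-rad}
\||x|^{\varepsilon}w\|_{L^\infty}\lesssim\varepsilon^{-1/2}\,\|w\|_{L^2}^{\varepsilon}\,\|\nabla w\|_{L^2}^{1-\varepsilon}.
\end{equation}
By the scaling $w\mapsto w(\lambda\cdot)$ (under which $\|\nabla w\|_{L^2}$ is invariant in $\mathbb{R}^2$) I may assume $\|w\|_{L^2}=\|\nabla w\|_{L^2}=1$ and must show $\sup_{r>0}r^{\varepsilon}|w(r)|\lesssim\varepsilon^{-1/2}$. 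For $r\ge1$ the endpoint radial inequality $|w(r)|\lesssim r^{-1/2}\|w\|_{L^2}^{1/2}\|\nabla w\|_{L^2}^{1/2}$ gives $r^{\varepsilon}|w(r)|\lesssim r^{\varepsilon-1/2}\le1$; for $r\le1$ the fundamental theorem of calculus and Cauchy--Schwarz in the measure $\rho\,d\rho$ give the logarithmic bound $|w(r)|\lesssim\bigl(\log(2/r)\bigr)^{1/2}$, and maximizing $r^{\varepsilon}\bigl(\log(2/r)\bigr)^{1/2}$ over $0<r\le1$ (the maximum is at $\log(2/r)=1/(2\varepsilon)$) produces exactly the factor $\varepsilon^{-1/2}$. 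Applying \eqref{plan-rad} to $w=M(-\tau)u(t)$, using $\|w\|_{L^2}=\|u_0\|_{L^2}\le R$ and $\|\nabla w\|_{L^2}\le C(R)\tau^{-1/2}$, gives
\begin{equation}
|x|^{\varepsilon}|u(t,x)|=|x|^{\varepsilon}|w(x)|\lesssim\varepsilon^{-1/2}R^{\varepsilon}\bigl(C(R)\tau^{-1/2}\bigr)^{1-\varepsilon}\le C(R)\,\varepsilon^{-1/2}\,t^{-(1-\varepsilon)/2},
\end{equation}
for $t>1$ (since $\tau=1+t\simeq t$), which is \eqref{14}.

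\emph{Main obstacle.} The structurally decisive step is the pseudoconformal identity and the resulting Gronwall bound $\|\nabla v(s)\|_{L^2}\lesssim(1-s)^{-1/2}$, equivalently $\|\nabla(M(-\tau)u)\|_{L^2}\lesssim\tau^{-1/2}$; this is precisely where the criticality $p=1+2/n$ enters, through the singular factor $(1-s)^{-1}$ in \eqref{eq.PSCv1}. The subtler analytic point is the sharp constant $\varepsilon^{-1/2}$ in \eqref{plan-rad}, which reflects the logarithmic failure of $\dot H^1\hookrightarrow L^\infty$ in two dimensions and is captured by the optimization of $r^{\varepsilon}(\log(2/r))^{1/2}$; here radial symmetry is indispensable, as \eqref{plan-rad} fails for general $H^1$ data.
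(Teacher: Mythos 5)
Your proof is correct and follows the same two-step architecture as the paper's: a pseudoconformal energy bound giving $\|\nabla(M(-\tau)u(t))\|_{L^2}\le C(R)\,\tau^{-1/2}$ with $\tau=1+t$, followed by a radial weighted interpolation inequality with constant $\varepsilon^{-1/2}$ applied to $w=M(-\tau)u(t)$. The implementations differ in both steps, and both of yours are sound. For the energy bound, the paper multiplies \eqref{eq.meq93} by $\partial_\tau\overline{v}$ and integrates the resulting exact identity (Lemma \ref{lem.L1.1}, transferred back to $u$ in Proposition \ref{lem.mapes}); your Gronwall argument on $\mathcal{E}(s)$ gives the same bound --- indeed $(1-s)\mathcal{E}(s)$ equals one quarter of the paper's conserved quantity minus its dissipation integral --- but it discards the term $\int_0^t(1+s)^{-2}\|J(1+s)u(s)\|_{L^2}^2\,ds$, which is harmless for Theorem \ref{T.2} though essential for Theorem \ref{MT1}. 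For the weighted inequality, the paper combines the radial Strauss-type bound $|w(x)|^p\le\frac{p}{|x|}\|w\|_{L^{2p-2}}^{p-1}\|\nabla w\|_{L^2}$ with the planar Gagliardo--Nirenberg inequality $\|w\|_{L^q}\le Cq^{1/2}\|w\|_{L^2}^{2/q}\|\nabla w\|_{L^2}^{1-2/q}$ and sets $p=1/\varepsilon$; unwinding the exponents, this is exactly your estimate $\||x|^{\varepsilon}w\|_{L^\infty}\lesssim\varepsilon^{-1/2}\|w\|_{L^2}^{\varepsilon}\|\nabla w\|_{L^2}^{1-\varepsilon}$. Your derivation (endpoint Strauss for $r\ge1$, the logarithmic bound $|w(r)|\lesssim(\log(2/r))^{1/2}$ for $r\le1$, optimization at $\log(2/r)=1/(2\varepsilon)$) is self-contained and makes the origin of the factor $\varepsilon^{-1/2}$ --- the logarithmic failure of $\dot H^1(\mathbb{R}^2)\hookrightarrow L^{\infty}$ --- explicit, whereas the paper invokes the known $q^{1/2}$ growth of the Gagliardo--Nirenberg constant. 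Two minor points to tighten: the normalization $\|w\|_{L^2}=\|\nabla w\|_{L^2}=1$ uses the homogeneity of the inequality in $w$ together with the scaling $w\mapsto w(\lambda\cdot)$, since scaling alone is only a one-parameter family; and the pairing of the equation with $\overline{\partial_s v}$ should be justified by $H^2$ approximation of the data, as the paper notes.
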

The  idea of the proofs of  these $L^\infty$ bounds
is based on the  pseudo - conformal estimate
    \begin{equation}
    \begin{aligned}
           & t^{-1} \|J(t) u(t)\|_{L^2}^2 \lesssim 1
        \end{aligned}
    \end{equation}
    where $t \geq 1$ and
    \begin{equation}
        J(t) = x+it\nabla = U(t)xU(-t) = M(t)it \nabla M(-t).
    \end{equation}

In addition, we turn to the case where no $L^\infty$ control is guaranteed.
In the case $n=2$ we have the following result that is a generalization of a result of Hirata in \cite{H95}. We use the standard notation of homogeneous Besov spaces $\dot{B}^{s}_{p,q}$ (see \cite{BL76} for definition and properties).
\begin{thm}\label{th5} Let $n=2.$
    For any $u_0 \in \mathcal{F}(H^1)$ there exists a unique $u_+ \in L^2,$ so  that
    \begin{equation}\label{wp1}
        \begin{aligned}
            \lim_{t \to +\infty}  \left\| U(-t) \exp \left(i \Phi_t(u) \right) u(t) - u_+\right\|_{\mathcal{F}(\dot{B}^{-1}_{2,\infty})} = 0,
            \end{aligned}
    \end{equation}
    where $\Phi_t$ is defined in \eqref{eq.phd}.
\end{thm}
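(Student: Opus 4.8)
The plan is to prove that
$$w(t):=U(-t)\exp\!\big(i\Phi_t(u)\big)u(t)$$
is a Cauchy family in $X:=\mathcal{F}(\dot{B}^{-1}_{2,\infty})$ as $t\to+\infty$, and to identify its limit with $u_+$. Writing $w(t_2)-w(t_1)=\int_{t_1}^{t_2}\partial_t w\,dt$, it is enough to establish a time-integrable bound $\|\partial_t w(t)\|_X\lesssim t^{-1-\delta}$ for some $\delta>0$.

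First I would differentiate. Setting $g=\exp(i\Phi_t(u))u$ and inserting $i\partial_t u=-\tfrac12\Delta u+u|u|^{2/n}$, the fact that $U(-t)$ commutes with $\Delta$ makes the linear term drop out, leaving $\partial_t w=U(-t)R(t)$ with $R(t)$ generated entirely by the phase. The crucial observation is that, because $\Phi_t(u)(x)=\int_1^t|u(\tau,\tau x/t)|^{2/n}\,d\tau$ depends on $t$ through both the endpoint and the ray $\tau x/t$, one has the identity
\[
\partial_t\Phi_t(u)(x)+\tfrac1t\,x\cdot\nabla\Phi_t(u)(x)=|u(t,x)|^{2/n}.
\]
Hence the resonant term $i(\partial_t\Phi_t)g$ cancels the gauge term $-ig|g|^{2/n}$ up to $-\tfrac it(x\cdot\nabla\Phi_t)g$, and $R(t)$ reduces to phase-gradient remainders of the schematic form $-\tfrac it(x\cdot\nabla\Phi_t)g+(\nabla\Phi_t)\cdot\nabla g-\tfrac i2|\nabla\Phi_t|^2g+\tfrac12(\Delta\Phi_t)g$. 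This cancellation is precisely why $\Phi_t$ has this shape.

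The a priori input is the pseudoconformal bound $t^{-1}\|J(t)u(t)\|_{L^2}^2\lesssim1$ recorded in the outline. Since $J(t)=M(t)\,it\nabla\,M(-t)$ this gives $\|\nabla M(-t)u\|_{L^2}\lesssim t^{-1/2}$, and with $\|M(-t)u\|_{L^2}=\|u_0\|_{L^2}$ from mass conservation, the two-dimensional Gagliardo--Nirenberg inequality applied to $M(-t)u$ (recall $|M(-t)u|=|u|$) yields
\[
\|u(t)\|_{L^p}\lesssim t^{-\frac12\left(1-\frac2p\right)},\qquad 2\le p<\infty,
\]
which replaces the unavailable $t^{-1/2}$ pointwise bound; energy conservation keeps $\|\nabla u(t)\|_{L^2}$ bounded, controlling the factors $\nabla g$. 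The hard part is to turn these into $\|\partial_t w\|_X\lesssim t^{-1-\delta}$, and here the weak topology is indispensable. Measuring in $X=\mathcal{F}(\dot{B}^{-1}_{2,\infty})$ rather than $L^2$ gains one power of the weight: multiplication by $x$ becomes a Fourier-side derivative absorbed by the index $-1$, so that $\|x\,G\|_X\lesssim\|G\|_{L^2}$, which is matched to the weighted term $-\tfrac it(x\cdot\nabla\Phi_t)g$; for the unweighted terms one exploits the embedding $L^1\hookrightarrow\dot{B}^{-1}_{2,\infty}$ valid exactly in dimension $n=2$. The genuine difficulty, and the reason for both the weak norm and the restriction to $n=2$, is to balance the growth of $\nabla\Phi_t$ (an average of $\nabla|u|^{2/n}$ along rays, linked to the $t^{1/2}$ growth of $J(t)u$) against these gains and the explicit factor $t^{-1}$; the oscillation carried by $U(-t)$ can additionally be used to absorb borderline non-integrable contributions.

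Granting this estimate, $w(t)$ converges in $X$ to a limit $u_+$. To see $u_+\in L^2$ and its uniqueness, I would use mass conservation: $\|w(t)\|_{L^2}=\|u_0\|_{L^2}$ is uniformly bounded, so every sequence $w(t_k)$ has a weak-$L^2$ limit, which must equal $u_+$ since convergence in $X$ forces convergence in the sense of distributions. Thus $u_+\in L^2$ with $\|u_+\|_{L^2}\le\|u_0\|_{L^2}$, and uniqueness follows at once from the convergence in $X$.
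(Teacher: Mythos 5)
Your structural setup is sound: the identity $\partial_t\Phi_t+t^{-1}x\cdot\nabla\Phi_t=|u(t,x)|^{2/n}$ is correct, and the cancellation it produces is exactly the physical-side counterpart of the paper's Corollary \ref{cor3.1}, which packages the same computation (via the Dollard factorization) into the single identity
\begin{equation}
i\frac{d}{dt}v(t) = \mathcal{F}^{-1}e^{i\tilde{\Phi}_t}\,\frac{(-\Delta)}{2t^2}\,\mathcal{F}M(t)U(-t)u(t).
\end{equation}
The gap is in the quantitative step on which your whole plan rests: the bound $\|\partial_t w(t)\|_{X}\lesssim t^{-1-\delta}$ is not obtainable from the a priori information you invoke. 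The available bounds are $\|J(t)u(t)\|_{L^2}\lesssim t^{1/2}$ and hence $\|\nabla\tilde{\Phi}_t\|_{L^2}\lesssim\int_1^t\sigma^{-1}\|J(\sigma)u(\sigma)\|_{L^2}\,d\sigma\lesssim t^{1/2}$; consequently the borderline contribution (in the paper's notation, the term $II$ pairing $\nabla e^{i\tilde{\Phi}_\tau}$ against $\nabla\mathcal{F}\mathbb{W}(\tau)$; in yours, the terms $(\nabla\Phi_t)\cdot\nabla g$ and $t^{-1}(x\cdot\nabla\Phi_t)g$) is of size exactly $\tau^{-2}\cdot\tau^{1/2}\cdot\tau^{1/2}=\tau^{-1}$, which misses integrability by a logarithm. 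No choice of weak norm gains more at this point, and the closing appeal to ``oscillation carried by $U(-t)$'' is a hope, not an argument; the paper uses no such mechanism.

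What actually closes the proof is not a rate but the defocusing structure. The pseudoconformal energy identity (Proposition \ref{lem.mapes}) supplies the dissipation integral $\int_1^{\infty}\tau^{-2}\|J(\tau)u(\tau)\|_{L^2}^2\,d\tau<\infty$ (estimate \eqref{eq.jco29}), i.e.\ square-integrability of $\tau^{-1}\|J(\tau)u(\tau)\|_{L^2}$, which is strictly stronger than the pointwise bound precisely at the borderline. The paper estimates the tested Cauchy differences $\left.(v(t)-v(s)\,\right\vert\varphi)$ by Cauchy--Schwarz in time and the Hardy inequality \eqref{H1}, producing tails $H(t)-H(s)$ of the convergent integral $H(t)=\int_1^t\tau^{-2}\|J(\tau)u(\tau)\|^2_{L^2}\,d\tau$; these tend to $0$ with no rate whatsoever. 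The duality $(\dot{B}^1_{2,1})'=\dot{B}^{-1}_{2,\infty}$ together with the embeddings $\dot{B}^1_{2,1}\hookrightarrow\dot{H}^1$ and $\dot{B}^1_{2,1}\hookrightarrow L^\infty$ (this is where $n=2$ enters, matching the two factors $\|\nabla\mathcal{F}\varphi\|_{L^2}$ and $\|\mathcal{F}\varphi\|_{L^\infty}$ in the tested estimate) then upgrades this to convergence in $\mathcal{F}(\dot{B}^{-1}_{2,\infty})$. If you replace your integrable-decay step by this Cauchy--Schwarz/Hardy argument on the tail of the dissipation integral, your outline becomes essentially the paper's proof; as written, the central estimate fails.
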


Turning to the case $n \geq 3, $ we consider the problem
\begin{equation}\label{eq.NLSf4}
    \begin{aligned}
    & (i \partial_t + \frac{1}{2}\Delta_x)u = u ((1+|u|^{2})^{1/n}-1), \ \ t \geq 0; \\
    & u(0)= u_0.
    \end{aligned}
\end{equation}
The nonlinearity is associated with  the functions.
\begin{equation}\label{fvW}
    \begin{aligned}
        &f(z) = (\langle z\rangle^{2/n}-1) z, \\
        & V(z) = \frac{n}{n+1} (\langle z\rangle^{2(n+1)/n}-1)   - |z|^2, \\
        & W(z) = (n+2) V(z) -n \overline{z} f(z)
    \end{aligned}
\end{equation}
so that $f(z) = \partial_{\overline{z}} V$
and $f(0)=V(0)=W(0)=0.$
\begin{thm}\label{th6} Let $n \geq 3.$
    For any $u_0 \in \mathcal{F}(H^1)$ there exists a unique $u_+ \in L^2,$ so  that
    \begin{equation}\label{wpn2}
        \begin{aligned}
            \lim_{t \to +\infty}  \left\| U(-t) \exp \left(i \Phi_t(u) \right) u(t) - u_+\right\|_{\mathcal{F}(\dot{B}^{-n/2}_{2,\infty})} = 0,
            \end{aligned}
    \end{equation}
    where $u(t)$ is the solution to \eqref{eq.NLSf4}.
\end{thm}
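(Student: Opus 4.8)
The plan is to prove that the conjugated profile $w(t):=U(-t)\exp\!\big(i\Phi_t(u)\big)u(t)$ is Cauchy in $\mathcal F(\dot B^{-n/2}_{2,\infty})$ as $t\to+\infty$, so that the limit $u_+$ exists; its $L^2$-membership and uniqueness then follow from soft arguments. As preliminaries I would record the global well-posedness of \eqref{eq.NLSf4} in $\mathcal F(H^1)$, the conservation of mass $\|u(t)\|_{L^2}=\|u_0\|_{L^2}$ and of the energy built from the potential $V$ in \eqref{fvW}, and the pseudoconformal bound $t^{-1}\|J(t)u(t)\|_{L^2}^2\lesssim 1$ controlling the spreading of $u$ near $|x|\sim t$. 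The reason for replacing the bare critical power by the smooth nonlinearity of \eqref{eq.NLSf4} is precisely that for $n\geq3$ one has $2/n<1$, so $|u|^{2/n}$ is not $C^1$, whereas $f(z)=(\langle z\rangle^{2/n}-1)z$ is smooth and agrees with the critical power to leading order as $|z|\to\infty$.

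Next I would differentiate $w$. Writing $v=e^{i\Phi_t(u)}u$ and using $i\partial_t u=-\tfrac12\Delta u+f(u)$ one gets $\partial_t w=U(-t)e^{i\Phi_t(u)}G(t)$ with
\[
 G=i\big(\partial_t\Phi_t(u)-h(|u|^2)\big)u+\nabla\Phi_t(u)\cdot\nabla u+\tfrac12(\Delta\Phi_t(u))u+\tfrac i2|\nabla\Phi_t(u)|^2u ,
\]
where $h(s)=(1+s)^{1/n}-1$. The decisive algebraic point is that, by \eqref{eq.phd}, $\partial_t\Phi_t(u)(x)=|u(t,x)|^{2/n}+(\text{correction from the }\tau x/t\text{ dependence})$, and the leading term $|u|^{2/n}$ cancels exactly the critical part of $h(|u|^2)$. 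What survives is $i[(\text{correction})-(h(|u|^2)-|u|^{2/n})]u$ together with the geometric phase terms. Among these the mismatch $(h(|u|^2)-|u|^{2/n})u$ is the cleanest to track: the multiplier $h(s)-s^{1/n}$ is bounded (tending to $-1$), so this term is only $O(|u|)$ pointwise and does not decay in any strong norm, while the remaining terms carry at most mild (non-integrable) decay from the spreading bound.

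The heart of the argument is therefore a dispersive estimate in the weak space: for a fixed $g$, $\|U(-t)g\|_{\mathcal F(\dot B^{-n/2}_{2,\infty})}\lesssim t^{-n/2}\|g\|_{Y}$ for a suitable fixed norm $Y$. The mechanism is that, after the Dollard factorisation \eqref{eq.doll5aa}, $\mathcal F^{-1}U(-t)g$ is, up to a unimodular chirp, a dilation by $t$ of $\widehat g$; since the $\mathcal F(\dot B^{-n/2}_{2,\infty})$-norm is scaling critical — it equals $\sup_k 2^{-kn/2}\|\mathbf 1_{|x|\sim 2^k}g\|_{L^2}$ in physical dyadic shells, and neither the chirp nor the unimodular factor $e^{i\Phi_t(u)}$ is seen by it — each dilation by $t$ produces exactly the gain $t^{-n/2}$. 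Feeding in $g=e^{i\Phi_t(u)}G(t)$ and bounding $\|G\|_{Y}$ by the conserved mass and the pseudoconformal bound yields $\|\partial_t w\|_{\mathcal F(\dot B^{-n/2}_{2,\infty})}\lesssim t^{-n/2}$, which is integrable precisely because $n\geq3$; this is where the dimensional restriction enters, since for $n=2$ the integral is only logarithmically divergent (hence Theorem \ref{th5} needs Hirata's more delicate borderline analysis). Integrating in time shows $w(t)$ is Cauchy, so $w(t)\to u_+$ in $\mathcal F(\dot B^{-n/2}_{2,\infty})$; finally $u_+\in L^2$ because $\|w(t)\|_{L^2}=\|u_0\|_{L^2}$ for all $t$, so $w(t)$ is bounded in $L^2$ and its weak limit, which must coincide with $u_+$, lies in $L^2$, while uniqueness is immediate.

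The step I expect to be the main obstacle is making this weak-space dispersive estimate rigorous for the surviving remainder $G$. Two features demand care. First, the low physical-frequency dyadic shells, where the weight $2^{-kn/2}$ blows up: here one must exploit that $u(t)$, and hence $G$, is concentrated near $|x|\sim t$ (via the pseudoconformal bound) rather than near the origin, so that the offending shells carry no mass. Second, the geometric terms $\nabla\Phi_t(u)$, $\Delta\Phi_t(u)$, $|\nabla\Phi_t(u)|^2$ reintroduce derivatives of the non-smooth quantity $|u|^{2/n}$; controlling these — presumably by integrating by parts in the time integral defining $\Phi_t$, or by using that they are paired with factors that vanish where $u$ is small — is the delicate technical core. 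Once these are handled, the non-integrable-in-strong-norm mismatch $(h(|u|^2)-|u|^{2/n})u$ is tamed solely by the $t^{-n/2}$ gain of the weak norm.
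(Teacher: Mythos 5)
Your proposal has a genuine gap, and it starts with the choice of phase. The paper's Theorem \ref{th6} does \emph{not} pair the nonlinearity of \eqref{eq.NLSf4} with the phase \eqref{eq.phd}: in the section proving the theorem the phase is redefined as $\Phi_t(u)(x)=\int_1^{t}\big[(1+|u(\tau,\tau x/t)|^2)^{1/n}-1\big]\,d\tau$ (cf. \eqref{tphi}; the upper limit $+\infty$ there is a typo), i.e.\ it matches the nonlinearity $h(|u|^2)u$ of \eqref{eq.NLSf4} \emph{exactly}. With this matching choice, Lemma \ref{mle60} and the computation behind Corollary \ref{cor3.1} make the nonlinear term cancel identically in the Dollard variables, leaving $i\partial_t v=\mathcal{F}^{-1}e^{i\tilde\Phi_t}\frac{(-\Delta)}{2t^2}\mathcal{F}M(t)U(-t)u(t)$ with \emph{no} source term. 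Your mismatch $(h(|u|^2)-|u|^{2/n})u$ is not a harmless remainder: in the dispersive regime $|u|\to0$ one has $h(s)\approx s/n\ll s^{1/n}$, so the mismatch is $\approx -|u|^{2/n}u$, i.e.\ the \emph{full} critical long-range term, uncancelled. Its cumulative effect on the phase, $\int_1^t\big(h(|u|^2)-|u|^{2/n}\big)d\tau$, diverges like $\log t$ times a nonconstant function of $x/t$, so with your interpretation of $\Phi_t$ the limit $u_+$ generically does not exist in any of these norms; no dispersive gain can repair a divergent phase.

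The quantitative mechanism you propose also fails on its own terms. The estimate $\|U(-t)g\|_{\mathcal F(\dot B^{-n/2}_{2,\infty})}\lesssim t^{-n/2}\|g\|_{L^1}$ is fine, but your claim that $\|G(t)\|_{Y}$ is bounded by conserved quantities is unsubstantiated: under the natural heuristic $|u|\sim t^{-n/2}$ on $|x|\lesssim t$, $\|(h(|u|^2)-|u|^{2/n})u\|_{L^1}\sim t^{n/2-1}$, so this term contributes $\sim t^{-1}$ to $\|\partial_t w\|_{\mathcal F(\dot B^{-n/2}_{2,\infty})}$ — logarithmically divergent for \emph{every} $n$, so the restriction $n\geq3$ is not an integrability threshold. (In the paper $n\geq3$ plays a different role: it forces the replacement of $|u|^{2/n}u$, which is not nicely differentiable at $u=0$ when $2/n<1$, by the smooth saturated nonlinearity \eqref{fvW}, and it fixes the dual exponent in $(\dot B^{n/2}_{2,1})'=\dot B^{-n/2}_{2,\infty}$; Theorem \ref{th5} for $n=2$ is proved by the \emph{same} argument, not a more delicate one.) In addition, your remainder $G$ contains $(\Delta\Phi_t)u$ and $\nabla\Phi_t\cdot\nabla u$ built from $|u|^{2/n}$, which require $|u|^{2/n-1}\nabla u$ and second derivatives of $u$; neither is available for $u_0\in\mathcal F(H^1)$, where only $\|u(t)\|_{L^2}$ and $\|J(t)u(t)\|_{L^2}$ are controlled. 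The paper avoids all of this: it tests $v(t)-v(s)$ against $\varphi\in\mathcal S$, integrates by parts \emph{once} so that one derivative lands on $\mathcal F\varphi$, bounds $\|\nabla e^{i\tilde\Phi_\tau}\|_{L^2}$ using that $\partial_z$ of the saturated nonlinearity is pointwise bounded (no $L^\infty$ control of $u$ needed), and then concludes by Hardy's inequality \eqref{H1}, the convergence of $H(t)=\int_1^t\tau^{-2}\|J(\tau)u(\tau)\|_{L^2}^2\,d\tau$ coming from the pseudoconformal identity (cf.\ \eqref{eq.jco29}), Banach--Steinhaus, and the duality $(\dot B^{n/2}_{2,1})'=\dot B^{-n/2}_{2,\infty}$ — a qualitative tail-of-a-convergent-integral argument, not a rate-based one.
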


\section{Notations}
The Fourier transform is defined by
\begin{equation} \label{eq.ftr1}
    \begin{aligned}
        \mathcal{F} \varphi(\xi) = (2\pi)^{-n/2} \int_{\mathbb{R}^n}   e^{-i x\cdot \xi} \varphi(x) dx
    \end{aligned}
\end{equation}
for $\varphi \in \mathcal{S},$ rapidly decreasing $C^\infty$ functions. Here and below $x \cdot \xi$ means the scalar product of $x,\xi \in \mathbb{R}^n$ in $\mathbb{R}^n.$ Often, we shall write $x^2$ in the place of $x\cdot x$ for $x \in \mathbb{R}^n.$

We use the notation $H^s$ for the classical Sobolev space
\begin{equation}\label{eq.clsob1}
    \begin{aligned}
        H^s =H^s(\mathbb{R}^n) = (1-\Delta)^{-s/2} L^2(\mathbb{R}^n)  \\
        = \mathcal{F}^{-1} (1+|\xi|^2)^{-s/2} \mathcal{F} L^2(\mathbb{R}^n).
    \end{aligned}
\end{equation}

For any $z = (z_1, \cdots,z_k) \in \mathbb{C}^k$ we use the notation
\begin{equation}
    \langle z \rangle = ( 1+|z|^2)^{1/2}, |z|^2 = \sum_{j=1}^k |z_j|^2.
\end{equation}

In the Hilbert space $L^2(\mathbb{R}^n;\mathbb{C}),$
 which consists of square-integrable complex-valued functions on $\mathbb{R}^n,$ the inner product is defined as:
\begin{equation}\label{ip1}
    \left. ( f \right \vert g ) = \int_{\mathbb{R}^n} f(x) \overline{g(x)} dx.
\end{equation}

The inequality $X \lesssim Y$ is used as a shorthand notation for inequalities that involve implicit constants. Specifically,
$X \lesssim Y$ means $X \leq C Y$ for some constant $C.$

\section{Preliminary calculations}
\label{s.3}

We use the Dollard decomposition \eqref{eq.doll5aa}, involving the operators $M(t),D(t)$ defined in \eqref{eq.MD1}.

We have the relations
\begin{equation}\label{eq.doll5}
\begin{aligned}
    & U(-t) = M(t)^{-1} \mathcal{F}^{-1}D(t)^{-1} M(t)^{-1},\\
    & \mathcal{F} M(t) U(-t) = D(t)^{-1} M(t)^{-1}.
\end{aligned}
\end{equation}

Note that multiplication by a function commutes with $M(t)$
\begin{equation}
  M(t)(gf) =  g   M(t)(f) .
\end{equation}
Moreover,
\begin{equation}\label{eq.dte5}
\begin{aligned}
  & (D(t)^{-1} \ e^{i\Phi_t} f)(x) = (it)^{n/2} e^{i\Phi_t(xt)} f \left(xt  \right)\\
  &= e^{i\tilde{\Phi}_t(x)} (D(t)^{-1} f)(x),
\end{aligned}
\end{equation}
where
\begin{equation}\label{tild1}
    \tilde{\Phi}_t(x) = \int_1^t  |u(\tau, \tau x)|^{2/n} d \tau.
\end{equation}

Then we set
\begin{equation}\label{eq.defvt5}
    v(t) = M(t) U(-t) e^{i \Phi_t(u)} u(t) .
\end{equation}

It is easy to deduce the following.
\begin{lem}\label{mle60}
    For any $g \in \mathcal{S}$ we have
   \begin{equation}
       \mathcal{F} M(t) U(-t) (g u(t)) = \tilde{g}   \mathcal{F} M(t) U(-t)  u(t) , \ \ \ \tilde{g}(x) = g(x t).
   \end{equation}
\end{lem}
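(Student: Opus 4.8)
The plan is to read the identity directly off the Dollard decomposition rather than returning to the kernel $K_t$. The second relation in \eqref{eq.doll5} gives $\mathcal{F} M(t) U(-t) = D(t)^{-1} M(t)^{-1}$, so I would start from
\begin{equation}
\mathcal{F} M(t) U(-t) (g u(t)) = D(t)^{-1} M(t)^{-1} (g u(t)),
\end{equation}
and the whole task reduces to tracking how the multiplier $g$ is transported through the two remaining factors $M(t)^{-1}$ and $D(t)^{-1}$.

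First I would push $g$ past $M(t)^{-1}$. Since $M(t)^{-1} = M(-t)$ is merely multiplication by the phase $e^{-ix^2/(2t)}$ (see \eqref{eq.MD1}), it commutes with multiplication by any function, so $M(t)^{-1}(g u(t)) = g\, M(t)^{-1} u(t)$; this is the same commutation already recorded for $M(t)$ just above the lemma. Next I would apply $D(t)^{-1}$, using its explicit form $D(t)^{-1}(f)(x) = (it)^{n/2} f(xt)$ from \eqref{eq.MD1}. Evaluating on the product $g\, M(t)^{-1} u(t)$ yields
\begin{equation}
\bigl(D(t)^{-1}(g\, M(t)^{-1} u(t))\bigr)(x) = (it)^{n/2} g(xt)\, \bigl(M(t)^{-1} u(t)\bigr)(xt) = g(xt)\, \bigl(D(t)^{-1} M(t)^{-1} u(t)\bigr)(x).
\end{equation}
Recognizing $g(xt) = \tilde{g}(x)$ and $D(t)^{-1} M(t)^{-1} = \mathcal{F} M(t) U(-t)$ produces the claimed identity. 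This is precisely the computation already carried out in \eqref{eq.dte5} with the phase $e^{i\Phi_t}$ replaced by the general Schwartz multiplier $g$.

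There is no genuine obstacle here; the only point requiring care is the direction of the scaling, namely that the inverse dilation $D(t)^{-1}$ turns multiplication by $g$ into multiplication by $\tilde{g}(x) = g(xt)$ (and not $g(x/t)$), which is exactly the normalization fixed in \eqref{eq.MD1}. For rigor I would remark that $g \in \mathcal{S}$ is bounded, so $g u(t) \in L^2$ whenever $u(t) \in L^2$, placing every expression in the domain of the operators $\mathcal{F}, M(t), U(-t), D(t)^{-1}$; the identity then holds as an $L^2$ identity, and it suffices to have verified it on the Schwartz level.
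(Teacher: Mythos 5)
Your proposal is correct and follows essentially the same route as the paper's own proof: both invoke the Dollard relation $\mathcal{F} M(t)U(-t) = D(t)^{-1}M(t)^{-1}$ from \eqref{eq.doll5} and then transport $g$ through $D(t)^{-1}$ via the scaling identity \eqref{eq.dte5}/\eqref{eq.dte75}, with the commutation of $g$ past the phase multiplication $M(t)^{-1}$ being the (trivial) remaining step. Your version merely spells out these two steps more explicitly than the paper does.
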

\begin{proof}
    It is sufficient to apply \eqref{eq.doll5} that implies
    \begin{equation}\label{eq.dol68}
\begin{aligned}
    &\mathcal{F} M(t)U(-t) = D(t)^{-1} M(t)^{-1}.
\end{aligned}
\end{equation}
Then we can quote the following variant of
\eqref{eq.dte5}.
\begin{equation}\label{eq.dte75}
    D(t)^{-1} (gf)(x) = \tilde{g} D(t)^{-1}(f)(x)
\end{equation}
and complete the proof.
\end{proof}

We can apply the above Lemma and from \eqref{eq.defvt5} we can write
\begin{equation}
    v(t) = \mathcal{F}^{-1} \mathcal{F} M(t) U(-t) e^{i \Phi_t(u)} u(t)  =  \mathcal{F}^{-1} e^{i \tilde{\Phi}_t} \mathcal{F} M(t) U(-t)  u(t)
\end{equation}
due to \eqref{eq.doll5}.

In this way, we get
\begin{equation}\label{eq.meme0}
    \begin{aligned}
      &  i \partial_t v(t) = \mathcal{F}^{-1} i \partial_t \left(  e^{i \tilde{\Phi}_t} \mathcal{F} M(t) U(-t)  u(t)\right)  \\
        & = -  \mathcal{F}^{-1} \partial_t \tilde{\Phi}_t   e^{i \tilde{\Phi}_t}\mathcal{F} M(t) U(-t) u(t) \\
        & + i \mathcal{F}^{-1}   e^{i \tilde{\Phi}_t} \mathcal{F} \partial_t \left(M(t) U(-t) u(t)\right) .
    \end{aligned}
\end{equation}

On the other hand,
we have the integral representation
\begin{equation}
    u(t) = U(t) u_0 - i \int_0^t  U(t-\tau)  \ \ u(\tau)|u(\tau)|^{2/n}   \ \ d\tau .
\end{equation}
Hence
\begin{equation}
    i \partial_t \left( U(-t) u(t)\right) = U(-t) u(t)|u(t)|^{2/n}.
\end{equation}

Further we have the simple relation
\begin{equation}
    \partial_t (M(t) w(t)) = M(t) \left( \partial_t w(t) - i \frac{x^2}{2t^2} w(t)\right)
\end{equation}
hence
\begin{equation}
\begin{aligned}
   & i\partial_t \left( M(t) U(-t) u(t) \right) = M(t) U(-t) \left( u(t)|u(t)|^{2/n} \right) + \frac{x^2}{2t^2} M(t)U(-t)u(t).
\end{aligned}
\end{equation}
Now we turn back to \eqref{eq.meme0} and we can write
\begin{equation}\label{eq.meme1}
    \begin{aligned}
      &  i \partial_t v(t) =  -  \mathcal{F}^{-1}   e^{i \tilde{\Phi}_t} |u(t,xt)|^{2/n}\mathcal{F} M(t) U(-t) u(t) \\
        & +  \mathcal{F}^{-1}   e^{i \tilde{\Phi}_t} \mathcal{F} M(t) U(-t) \left( u(t)|u(t,x)|^{2/n} \right)  \\
        &+  \mathcal{F}^{-1}   e^{i \tilde{\Phi}_t} \mathcal{F} \frac{x^2}{2t^2} M(t)U(-t)u(t) \\
        &= -  \mathcal{F}^{-1}   e^{i \tilde{\Phi}_t} \mathcal{F} M(t) U(-t) \left(|u(t,xt)|^{2/n} u(t)\right) \\
        & +  \mathcal{F}^{-1}   e^{i \tilde{\Phi}_t} \mathcal{F} M(t) U(-t) \left( u(t)|u(t,x)|^{2/n} \right)  \\
        &+  \mathcal{F}^{-1}   e^{i \tilde{\Phi}_t} \mathcal{F} \frac{x^2}{2t^2} M(t)U(-t)u(t)  \\
        &= - \frac{1}{2t^2} \mathcal{F}^{-1}   e^{i \tilde{\Phi}_t} \Delta \mathcal{F}  M(t)U(-t)u(t).
    \end{aligned}
\end{equation}

Thus we obtain

\begin{cor}\label{cor3.1}
  \begin{equation}
    \begin{aligned}
    & i \frac{d}{dt} v(t)  = i \frac{d}{dt} \left( M(t)U(-t)  \exp \left(i \int_0^t |u(\tau, \tau x/t)|^{p-1} d\tau \right)   u(t)\right)  \\
     & = \mathcal{F}^{-1}  \exp \left(i \int_0^t |u(\tau, \tau x)|^{p-1} d\tau \right) \frac{(-\Delta)}{2t^2}D(t)^{-1} M(-t) u(t)  \\
     & = \mathcal{F}^{-1}  \exp \left(i \int_0^t |u(\tau, \tau x)|^{p-1} d\tau \right) \frac{(-\Delta)}{2t^2}\mathcal{F} M(t) U(-t) u(t).
    \end{aligned}
\end{equation}
\end{cor}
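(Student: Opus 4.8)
The plan is to collect the computation already laid out in \eqref{eq.meme0}--\eqref{eq.meme1} into the stated three-line form, since the corollary is precisely the end product of that calculation. First I would record the representation $v(t) = \mathcal{F}^{-1} e^{i\tilde{\Phi}_t}\,\mathcal{F} M(t) U(-t) u(t)$, which follows from the definition \eqref{eq.defvt5} of $v(t)$ together with Lemma \ref{mle60} and the Dollard relation \eqref{eq.doll5}. Differentiating in $t$ by the product rule splits $i\partial_t v(t)$ into a contribution coming from the phase $e^{i\tilde{\Phi}_t}$ and a contribution coming from the propagated solution $M(t)U(-t)u(t)$.

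For the phase contribution I would use $\partial_t\tilde{\Phi}_t(x) = |u(t,tx)|^{2/n}$, read off directly from \eqref{tild1}; note that the lower limit of integration is irrelevant to this derivative. For the evolution contribution I would combine the Duhamel identity $i\partial_t\bigl(U(-t)u(t)\bigr) = U(-t)\bigl(u(t)|u(t)|^{2/n}\bigr)$ with the commutation rule $\partial_t\bigl(M(t)w(t)\bigr) = M(t)\bigl(\partial_t w(t) - i\tfrac{x^2}{2t^2}w(t)\bigr)$, which together give $i\partial_t\bigl(M(t)U(-t)u(t)\bigr) = M(t)U(-t)\bigl(u|u|^{2/n}\bigr) + \tfrac{x^2}{2t^2}M(t)U(-t)u(t)$. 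Substituting both into the product rule produces exactly the three terms displayed in \eqref{eq.meme1}.

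The heart of the argument is the cancellation of the two nonlinear terms, and this is the step I expect to need the most care. The phase term carries the multiplier $|u(t,tx)|^{2/n}$ acting on the frequency side; applying Lemma \ref{mle60} in reverse with $g(x)=|u(t,x)|^{2/n}$, so that $\tilde g(x)=g(xt)=|u(t,tx)|^{2/n}$, identifies this multiplier with $\mathcal{F}M(t)U(-t)$ applied to the physical-space nonlinearity $|u(t,x)|^{2/n}u(t)$. It therefore coincides, up to sign, with the Duhamel term, and the two cancel. What survives is the single term $\mathcal{F}^{-1}e^{i\tilde{\Phi}_t}\mathcal{F}\,\tfrac{x^2}{2t^2}M(t)U(-t)u(t)$.

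Finally I would convert multiplication by $x^2$ into the frequency-side Laplacian through $\mathcal{F}(x^2 g) = -\Delta\,\mathcal{F}g$, turning the surviving term into $\mathcal{F}^{-1}e^{i\tilde{\Phi}_t}\tfrac{-\Delta}{2t^2}\mathcal{F}M(t)U(-t)u(t)$, which is the third line of the statement; the second line then follows by rewriting $\mathcal{F}M(t)U(-t) = D(t)^{-1}M(-t)$ via \eqref{eq.doll5}. Since every manipulation is purely algebraic once these identities are in hand, there is no genuine analytic obstacle, only the bookkeeping of tracking the dilation $x\mapsto tx$ built into Lemma \ref{mle60} so that the frequency multiplier produced by $\partial_t\tilde{\Phi}_t$ matches the physical-space nonlinearity exactly; the $x^2\mapsto-\Delta$ identity is used in the tempered-distribution sense, which is legitimate at the regularity available here.
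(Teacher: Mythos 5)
Your proposal is correct and follows essentially the same route as the paper: the representation $v(t)=\mathcal{F}^{-1}e^{i\tilde{\Phi}_t}\mathcal{F}M(t)U(-t)u(t)$, the product-rule split of \eqref{eq.meme0}, the cancellation of the phase-derivative term against the Duhamel nonlinearity via Lemma \ref{mle60} exactly as in \eqref{eq.meme1}, and the conversion $\mathcal{F}(x^2 g)=-\Delta\mathcal{F}g$ together with $\mathcal{F}M(t)U(-t)=D(t)^{-1}M(-t)$ from \eqref{eq.doll5}. Your remark that the lower limit of integration in $\tilde{\Phi}_t$ is irrelevant to the time derivative correctly disposes of the only discrepancy between the corollary's statement and \eqref{tild1}.
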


\section{$L^\infty$ bound implies the existence of modified wave operator}

\begin{proof}[Proof of Theorem \ref{MT1} ]

We start with the plan of the proof. It consists of the following steps.

\begin{enumerate}
    \item[Step I)] We have to  prove that
\begin{equation}\label{eq.defvt47}
    v(t) = M(t) U(-t) e^{i \Phi_t(u)} u(t)
\end{equation}
has $L^2$ limit as $t \to +\infty.$  Indeed, the $L^2$ limit in \eqref{mwoe60} is valid if
$$ U(-t) e^{i \Phi_t(u)} u(t) $$
is $L^2$ Cauchy sequence as $t \to +\infty.$ Taking into account the fact that
\begin{equation}
    \lim_{t \to +\infty}\|M(t) g - g\|_{L^2} = 0, \ \ \forall g \in L^2
\end{equation}
due to Lebesgue convergence theorem, we see that we have to check that $v(t)$ is a Cauchy sequence.
Therefore in this step we shall verify this property assuming $n \leq 2$ we have the $L^\infty$ control \eqref{linf5} as well as we have the uniform estimates
 \begin{equation}\label{eq.jco6}
        \|J(t+1) u(t)\|_{L^2} \lesssim  (1+t)^{1/2} \|u(0)\|_{\mathcal{H}^{0, 1}}  .
    \end{equation}
    \begin{equation}\label{eq.jco29}
     \int_0^t  (1+\tau)^{-2} \|J(1+\tau) u(\tau)\|_{L^2}^2 d\tau \lesssim  \|u(0)\|^2_{\mathcal{H}^{0,1}}.
    \end{equation}
Here and below $J(1+t)$ is the generator of the pseudoconformal group
    \begin{equation} \label{eq.nmn}
    J(1+t) = x+i(1+t) \nabla_x ;
\end{equation}

    \item[Step II)] We prove \eqref{eq.jco6}  and \eqref{eq.jco29} by using pseudoconformal transform and applying simple energy estimate.
\end{enumerate}

Now we turn to the  details of  the proof of Theorem \ref{MT1}.

     Note that $v(t)$ satisfies
the relations (see Lemma \ref{mle60})
\begin{equation}
\begin{aligned}
   \mathcal{F}(v(t)) =    \mathcal{F}  M(t) U(-t) e^{i \Phi_t(u)} u(t) \\
   = e^{i \tilde{\Phi}_t} \mathcal{F}  M(t) U(-t)  u(t)= e^{i \tilde{\Phi}_t}
\mathcal{F}  M(t) U(-t)  u(t) .
\end{aligned}
\end{equation}
The conservation of mass guarantees that it is sufficient to prove that
\begin{equation}
 \frac{ \|v(t)-v(s)\|_{L^2}^2}{2} =  \|v(s)\|_{L^2}^2 - \left. \mathrm{Re} ( v(t) \right \vert| v(s) )
\end{equation}
is small if $t>s>M$ and  if $M$ is large.

We use Corollary \ref{cor3.1} and setting
\begin{equation}\label{eq.bw1}
    \mathbb{W}(t) =  M(t) U(-t) u(t),
\end{equation}
we can write
\begin{equation}
    \begin{aligned}
       & \|v(s)\|_{L^2}^2 - \mathrm{Re} \left( v(t) \big| v(s) \right) =
        -  \mathrm{Re} \left( v(t)-v(s) | v(s) \right)  \\
       &= -\mathrm{Re}\int_s^t \left( v^\prime(\tau)| v(s) \right) d\tau  \\
       & = \left. \mathrm{Im} \int_s^t  \left(  \mathcal{F}^{-1}  \exp \left(i \tilde{\Phi}_\tau  \right) \frac{(-\Delta)}{2\tau^2}\mathcal{F} \mathbb{W}(\tau) \right \vert  M(s) U(-s) e^{i \Phi_s} u(s)  \right) d\tau\\
       & = \left.\mathrm{Im}  \int_s^t  \left(    \exp \left(i \tilde{\Phi}_\tau  \right) \frac{(-\Delta)}{2\tau^2}\mathcal{F} \mathbb{W}(\tau) \right \vert e^{i \tilde{\Phi}_s} \mathcal{F} \mathbb{W}(s)  \right) d\tau  \\
       & = \left. \mathrm{Im}  \int_s^t  \left(    \exp \left(i \tilde{\Phi}_\tau - i\tilde{\Phi}_s  \right) \frac{(-\Delta)}{2\tau^2}\mathcal{F} \mathbb{W}(\tau) \right \vert  \mathcal{F} \mathbb{W}(s) \right) d\tau  \\
       & =\left.  \mathrm{Im}  \int_s^t  \left(    \exp \left(i \tilde{\Phi}_\tau - i\tilde{\Phi}_s  \right) \frac{\nabla_x}{2\tau^2}\mathcal{F}\mathbb{W}(\tau)  \right \vert \nabla_x \mathcal{F}\mathbb{W}(s)  \right) d\tau  \\
       &+ \left. \mathrm{Im}  \int_s^t  \left(   \nabla_x \left[ \exp \left(i \tilde{\Phi}_\tau - i\tilde{\Phi}_s  \right) \right] \frac{\nabla_x}{2\tau^2}\mathcal{F} \mathbb{W}(\tau) \right \vert  \mathcal{F} \mathbb{W}(s)  \right) d\tau .
    \end{aligned}
\end{equation}
Setting
\begin{equation}
    \begin{aligned}
      &  I(t,s) = \left.  \mathrm{Im}  \int_s^t  \left(    \exp \left(i \tilde{\Phi}_\tau - i\tilde{\Phi}_s  \right) \frac{\nabla_x}{2\tau^2}\mathcal{F} \mathbb{W}(\tau) \right \vert \nabla_x \mathcal{F}\mathbb{W}(s)  \right) d\tau , \\
        & II(t,s) = \left. \mathrm{Im}  \int_s^t  \left(   \nabla_x \left[ \exp \left(i \tilde{\Phi}_\tau - i\tilde{\Phi}_s  \right) \right] \frac{\nabla_x}{2\tau^2}\mathcal{F}\mathbb{W}(\tau) \right \vert  \mathcal{F} \mathbb{W}(s)  \right) d\tau,
    \end{aligned}
\end{equation}
where here and below we use the definition of $\mathbb{W}$ from \eqref{eq.bw1}.
We can estimate the first term as follows
\begin{equation}
    \begin{aligned}
        |I(t,s)| \lesssim  \int_s^t \left \| \frac{(\nabla_x)}{2\tau^2}\mathcal{F} \mathbb{W}(\tau) \right\|_{L^2}\left\| \nabla_x \mathcal{F}\mathbb{W}(s) \right \|_{L^2}d\tau .
    \end{aligned}
\end{equation}
Note that
\begin{equation}
    \begin{aligned}
        & \left \| \nabla_x \mathcal{F} \mathbb{W}(\tau) \right\|_{L^2} = \left \| \nabla_x \mathcal{F} M(\tau) U(-\tau) u(\tau) \right\|_{L^2}\\
        &=  \left \| x M(\tau) U(-\tau) u(\tau) \right\|_{L^2}=  \left \|U(\tau) x U(-\tau) u(\tau) \right\|_{L^2} = \left \|J(\tau)  u(\tau) \right\|_{L^2},
    \end{aligned}
\end{equation}
where
\begin{equation}
    J(t) = U(t) x U(-t)  = x+it \nabla_x
\end{equation}
and
\begin{equation}\label{eq.tau12}
\begin{aligned}
    & \left \| J(\tau) u(\tau) \right\|_{L^2} \leq  \left \| J(1+\tau) u(\tau) \right\|_{L^2} + \left \| \nabla u(\tau) \right\|_{L^2}\\
    & \lesssim \tau^{1/2} + O(1),  \ \ \tau \to \infty
\end{aligned}
\end{equation}
due to uniform bounds of Proposition \ref{lem.mapes}.
 Then we find
\begin{equation}
    \begin{aligned}
      &  |I(t,s)| \lesssim  \int_s^t \tau^{-2}  \left \| J(\tau) u(\tau) \right\|_{L^2}  \left\|J  u(s) \right \|_{L^2}d\tau \\
      &\lesssim s^{1/2} \int_s^t \tau^{-2} \left \| J(\tau)  u(\tau) \right\|_{L^2} d\tau  \\
      & \lesssim s^{1/2} \left(\int_s^t \frac{d\tau}{\tau^2}\right)^{1/2} \left(\int_s^t \left \| J(\tau) u(\tau) \right\|_{L^2}^2 \frac{d\tau}{\tau^2}\right)^{1/2} = o(1)
    \end{aligned}
\end{equation}
as $s \to +\infty.$
To estimate the second term we proceed as follows
\begin{equation}
    \begin{aligned}
        | II(t,s) | \lesssim \int_s^t   \left\| \nabla_x \left(i \tilde{\Phi}_\tau - i\tilde{\Phi}_s  \right) \right\|_{L^2}
   \left\|\frac{(\nabla_x)}{2\tau^2}\mathcal{F} \mathbb{W}(\tau)\right\|_{L^2}  \|\mathcal{F} \mathbb{W}(s)\|_{L^\infty}  d\tau.
    \end{aligned}
\end{equation}
On the other hand, we use \eqref{eq.doll5} and deduce
\begin{equation}
\begin{aligned}
    &\|\mathcal{F} \mathbb{W}(s)\|_{L^\infty} =\|\mathcal{F} M(s) U(-s)  u(s)\|_{L^\infty}  = \|D(-s)M(-s)  u(s)\|_{L^\infty}    \\
    &=\|D(-s)  u(s)\|_{L^\infty} = s^{n/2} \| u(s)\|_{L^\infty} = O(1).
\end{aligned}
\end{equation}
Further, we can write
\begin{equation}\label{eq.nax3}
\begin{aligned}
    &\nabla_x \left( u(\sigma, \sigma x) e^{i x^2 \sigma/2} \right)       \\
    & = e^{i x^2 \sigma/2}  \left[  \sigma (\nabla_y u)(\sigma,\sigma x) +i\sigma x u(\sigma,\sigma x) \right]\\
    & = i e^{i x^2 \sigma/2}  (J(\sigma) u(\sigma))( \sigma x),
\end{aligned}
\end{equation}
so
we get
\begin{equation}
\begin{aligned}
   &  \left| \nabla_x \left(i \tilde{\Phi}_\tau - i\tilde{\Phi}_s  \right) \right| \lesssim
   \int_s^{\tau} \left|\nabla |u(\sigma, \sigma x)|^{2/n} \right|  d\sigma \\
   & = \int_s^{\tau} \left|\nabla_x |e^{-ix^2\sigma/4} u(\sigma, \sigma x)|^{2/n} \right|  d\sigma \\
   & \lesssim \int_s^{\tau} \left|\nabla_x |e^{-ix^2\sigma/4} u(\sigma, \sigma x)|^{2/n} \right|  d\sigma \\
   & \lesssim \int_s^{\tau}  |(J(\sigma) u(\sigma)) ( \sigma x)| |u(\sigma,\sigma x)|^{2/n-1}   d\sigma  \\
   & \lesssim \int_s^{\tau}  |(J(\sigma) u(\sigma)) ( \sigma x)| \sigma^{-1+n/2}  d\sigma.
\end{aligned}
\end{equation}
Taking the $L^2$ norm we find
\begin{equation}\label{eq.jnam9}
\begin{aligned}
   &  \left\| \nabla_x \left(i \tilde{\Phi}_\tau - i\tilde{\Phi}_s  \right) \right  \|_{L^2}  \\
   & \lesssim \int_s^{\tau}  \|(J(\sigma) u(\sigma)) (\sigma \cdot)\|_{L^2} \ \sigma^{-1+n/2}  d\sigma \\
   & = \int_s^{\tau}  \|J(\sigma) u(\sigma)\|_{L^2}   \frac{d\sigma}{\sigma}.
\end{aligned}
\end{equation}
Therefore,  using \eqref{eq.tau12}, we arrive at
\begin{equation}\label{intes7}
    \begin{aligned}
      &  | II(t,s) | \lesssim \int_s^t \tau^{-2} \int_s^{\tau}  \|J(\sigma) u(\sigma)\|_{L^2}   \frac{d\sigma}{\sigma}
   \left\|J(\tau) u(\tau)\right\|_{L^2}    d\tau \\
   & \lesssim \left(\int_s^t \tau^{-2} \left(\int_s^{\tau}  \|J(\sigma) u(\sigma)\|_{L^2}   \frac{d\sigma}{\sigma}\right)^2 d \tau \right)^{1/2} \left( \int_s^t \tau^{-2} \left\|J(\tau) u(\tau)\right\|_{L^2}^2 d\tau\right)^{1/2}.
    \end{aligned}
\end{equation}
Now recall a minor modification of the classical Hardy inequality.
\begin{lem}
    If $F(s) \in C(0,+\infty) \cap L^2(0,+\infty),$ then
    \begin{equation}\label{H1}
     \begin{aligned}
      \int_s^t \tau^{-2} \left(\int_s^{\tau}  F(\sigma)  d\sigma \right)^2 d \tau \leq 4 \int_s^t    F(\tau)^2 d \tau
    \end{aligned}
\end{equation}
for $0<s<t<+\infty.$
\end{lem}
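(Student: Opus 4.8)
The plan is to treat \eqref{H1} as the $L^2$ Hardy inequality restricted to the finite interval $[s,t]$, so that the whole argument reduces to a single integration by parts followed by Cauchy--Schwarz. First I would introduce the antiderivative
\begin{equation}
G(\tau) = \int_s^\tau F(\sigma)\,d\sigma,
\end{equation}
which is $C^1$ on $(0,+\infty)$ because $F$ is continuous, and which satisfies $G(s)=0$ together with $G'=F$. With this notation the left-hand side of \eqref{H1} becomes $\int_s^t \tau^{-2} G(\tau)^2\,d\tau$, and the goal is to bound it by $4\int_s^t G'(\tau)^2\,d\tau=4\int_s^t F(\tau)^2\,d\tau$.

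The key step is to write $\tau^{-2} = -\tfrac{d}{d\tau}(\tau^{-1})$ and integrate by parts:
\begin{equation}
\int_s^t \frac{G(\tau)^2}{\tau^2}\,d\tau
= \left[-\frac{G(\tau)^2}{\tau}\right]_s^t + 2\int_s^t \frac{G(\tau)\,G'(\tau)}{\tau}\,d\tau .
\end{equation}
Here the boundary contribution at $\tau=s$ vanishes because $G(s)=0$, while the contribution $-G(t)^2/t$ at $\tau=t$ is nonpositive and may simply be discarded. This is the only place where the finite-interval version differs from the classical statement, and it is precisely the lower-endpoint normalization $G(s)=0$ that renders both boundary terms harmless; I would therefore flag the bookkeeping of these two terms as the one point needing care, even though neither poses a genuine obstacle.

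It then remains to estimate the surviving integral by Cauchy--Schwarz,
\begin{equation}
2\int_s^t \frac{G(\tau)}{\tau}\,G'(\tau)\,d\tau
\le 2\left(\int_s^t \frac{G(\tau)^2}{\tau^2}\,d\tau\right)^{1/2}
\left(\int_s^t G'(\tau)^2\,d\tau\right)^{1/2}.
\end{equation}
Writing $A=\int_s^t \tau^{-2}G(\tau)^2\,d\tau$ and $B=\int_s^t G'(\tau)^2\,d\tau=\int_s^t F(\tau)^2\,d\tau$, the two previous displays combine into the scalar inequality $A \le 2\sqrt{A}\,\sqrt{B}$. If $A=0$ the claim is immediate; otherwise I divide by $\sqrt{A}$ to obtain $\sqrt{A}\le 2\sqrt{B}$, that is $A\le 4B$, which is exactly \eqref{H1}. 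The constant $4$ is the familiar sharp Hardy constant and emerges automatically from this scheme, so no further optimization is required.
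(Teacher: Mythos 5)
Your proof is correct, but it takes a genuinely different route from the paper. The paper does not re-derive the inequality: it substitutes $f(\tau)=F(s+\tau)$ to move the lower endpoint to $0$, observes the pointwise comparison $(s+\tau)^{-1}<\tau^{-1}$, and then invokes the classical Hardy inequality on $(0,+\infty)$ (cited from Hardy--Littlewood--P\'olya) as a black box. You instead reprove Hardy's inequality directly on the finite interval: integration by parts with $\tau^{-2}=-\frac{d}{d\tau}(\tau^{-1})$, the normalization $G(s)=0$ killing the lower boundary term and the sign discarding the upper one, then Cauchy--Schwarz and the scalar absorption $A\le 2\sqrt{A}\sqrt{B}\Rightarrow A\le 4B$. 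Each approach has its merits: the paper's is two lines given the classical result, while yours is self-contained, shows exactly where the constant $4$ and the lower-endpoint normalization enter, and in fact needs less than the stated hypotheses --- continuity of $F$ on $[s,t]$ alone suffices, since all integrals live on that compact interval; the global assumption $F\in L^2(0,+\infty)$ plays no role in your argument (it is only what the paper needs to quote the inequality on the half-line). One small point to make explicit in your write-up: the division by $\sqrt{A}$ requires $A<+\infty$, which here is automatic because $G^2/\tau^2$ is continuous on the compact interval $[s,t]$ with $s>0$, so the absorption step is legitimate.
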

\begin{proof}
    Set $f(\tau) = F(s+\tau).$ Then the inequality \eqref{H1} is transformed into
    \begin{equation}\label{H4}
     \begin{aligned}
      \int_0^{t-s} (s+\tau)^{-2} \left(\int_0^{\tau}  f(\sigma)  d\sigma \right)^2 d \tau \leq 4 \int_0^{t-s}    f(\tau)^2 d \tau
    \end{aligned}
\end{equation}
Since
\begin{equation}
    (s+\tau)^{-1} < \tau^{-1},
\end{equation}
we see that  the classical Hardy inequality (see Chapter IX, section 327, inequality (9.8.2) in \cite{H52})
\begin{equation}\label{H8}
     \begin{aligned}
      \int_0^{+\infty} \tau^{-2} \left(\int_0^{\tau}  f(\sigma)  d\sigma \right)^2 d \tau \leq 4 \int_0^{+\infty}    f(\tau)^2 d \tau
    \end{aligned}
\end{equation}
implies \eqref{H4}. This completes the proof.
\end{proof}

We apply \eqref{H1} with $F(\sigma) = \sigma^{-1}\|J(\sigma) u(\sigma)\|_{L^2}  $ and via \eqref{eq.tau12} we arrive at
\begin{equation}\label{intes51}
    \begin{aligned}
      &  | II(t,s) | \lesssim \int_s^t \tau^{-2} \left\|J(\tau) u(\tau)\right\|_{L^2}^2 d\tau  = o(1).
    \end{aligned}
\end{equation}

\end{proof}

\section{Proof of \eqref{eq.jco6}  and \eqref{eq.jco29} via pseudoconformal transformation}

\subsection{Main Proposition.}

The key point in this section is the following.
\begin{prop}\label{lem.mapes}
   The solution $u$ to \eqref{eq.NLSf1}
satisfies
\begin{enumerate}
    \item the conservation of the mass
\begin{equation}
    \|u(t)\|_{L^2}^2 = \|u_0\|_{L^2}^2,
\end{equation}
\item the  conservation of the pseudoconformal energy
    \begin{equation}\label{CPCE1}
    \begin{aligned}
           & (1+t)^{-1} \|J(1+t) u(t)\|_{L^2}^2 + \frac{2n}{n+1} (1+t) \|u(t)\|_{L^{2+2/n}}^{2+2/n}\\
           &  + \int_0^t (1+s)^{-2} \|Ju(s)\|_{L^2}^2 ds \\
            &=  \|J(1) u(0)\|_{L^2}^2 + \frac{2n}{n+1}  \|u(0)\|_{L^{2+2/n}}^{2+2/n},
        \end{aligned}
    \end{equation}
    where
\begin{equation}
        J(t+1) = x+i(1+t)\nabla.
    \end{equation}
\end{enumerate}
\end{prop}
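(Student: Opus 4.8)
The plan is to transfer both assertions to the pseudoconformal variable $v$ from \eqref{eq.PCT11}, where mass conservation is immediate and the pseudoconformal energy \eqref{CPCE1} collapses to a one-line weighted energy balance. I would first record the scaling dictionary relating $u$ and $v$, then prove the balance for $v$ and translate back.

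\emph{Mass.} The identity $\|u(t)\|_{L^2}=\|u_0\|_{L^2}$ is the standard gauge-invariance computation: pairing \eqref{eq.NLSf1} with $u(t)$ in $L^2$ and taking imaginary parts annihilates the self-adjoint Laplacian term, while $\mathrm{Im}\int u|u|^{2/n}\overline{u}\,dx=\mathrm{Im}\int|u|^{2+2/n}\,dx=0$, so $\frac{d}{dt}\|u(t)\|_{L^2}^2=0$. As usual this is justified first for smooth data and then extended by density.

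\emph{Reduction of \eqref{CPCE1}.} Writing $s=t/(1+t)$ and using $J(1+t)=M(1+t)\,i(1+t)\nabla\,M(-(1+t))$ together with the explicit form of \eqref{eq.PCT11}, one computes $J(1+t)u=(1+t)^{-n/2}e^{\,ix^2/(2(1+t))}\,i(\nabla_y v)(s,x/(1+t))$, which after the change of variables $x=(1+t)y$ gives the dictionary
\[
\|J(1+t)u(t)\|_{L^2}^2=\|\nabla v(s)\|_{L^2}^2,\qquad (1+t)\,\|u(t)\|_{L^{2+2/n}}^{2+2/n}=\|v(s)\|_{L^{2+2/n}}^{2+2/n}.
\]
At $t=0$ these read $\|J(1)u_0\|_{L^2}^2=\|\nabla v(0)\|_{L^2}^2$ and $\|u_0\|_{L^{2+2/n}}^{2+2/n}=\|v(0)\|_{L^{2+2/n}}^{2+2/n}$, matching the right-hand side of \eqref{CPCE1}. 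Combined with $ds=(1+t)^{-2}dt$ in the dissipation integral, \eqref{CPCE1} becomes the equivalent statement for $v$,
\[
(1-s)\|\nabla v(s)\|_{L^2}^2+\frac{2n}{n+1}\|v(s)\|_{L^{2+2/n}}^{2+2/n}+\int_0^s\|\nabla v(\sigma)\|_{L^2}^2\,d\sigma=\|\nabla v(0)\|_{L^2}^2+\frac{2n}{n+1}\|v(0)\|_{L^{2+2/n}}^{2+2/n}.
\]

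\emph{Energy balance for $v$.} Since $v$ solves \eqref{eq.PSCv1}, i.e. $i\partial_s v=-\tfrac12\Delta v+g(s)\,v|v|^{2/n}$ with $g(s)=(1-s)^{-1}$, its time-dependent Hamiltonian $H(s)=\tfrac12\|\nabla v\|_{L^2}^2+g(s)\tfrac{n}{n+1}\|v\|_{L^{2+2/n}}^{2+2/n}$ varies only through the explicit factor $g$, so $H'(s)=g'(s)\tfrac{n}{n+1}\|v\|_{L^{2+2/n}}^{2+2/n}$. Using $g'=g^2$ and cancelling the common term yields $\tfrac12\frac{d}{ds}\|\nabla v\|_{L^2}^2+g(s)\tfrac{n}{n+1}\frac{d}{ds}\|v\|_{L^{2+2/n}}^{2+2/n}=0$; multiplying by $2(1-s)$ and regrouping the derivative gives
\[
\frac{d}{ds}\!\left[(1-s)\|\nabla v\|_{L^2}^2+\frac{2n}{n+1}\|v\|_{L^{2+2/n}}^{2+2/n}\right]=-\|\nabla v\|_{L^2}^2,
\]
and integrating on $[0,s]$ produces exactly the displayed $v$-identity.

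The routine parts are the algebraic dictionary and the final integration. \emph{The main obstacle is rigor:} differentiating $H$ along the flow is only formal for $\mathcal{H}^{1,1}$ solutions, so I would carry out the whole energy computation for smooth, spatially decaying approximations of $u_0$ — where the pairings and the integration by parts are legitimate — and pass to the limit using continuous dependence in $\mathcal{H}^{1,1}$ together with the conserved bounds supplied by the defocusing sign. One must also note that every manipulation takes place on a compact interval $[0,s]\subset[0,1)$, so the singular weight $g(s)=(1-s)^{-1}$ stays bounded; the cancellation producing the balance is itself sign-independent, but finiteness of each term — and hence the right to integrate — relies on the nonnegativity of the defocusing energy.
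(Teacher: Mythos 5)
Your proposal is correct and takes essentially the same route as the paper: both pass to the pseudoconformal variable $v$ solving \eqref{eq.PSCv1}, obtain the weighted energy balance \eqref{eq.ee5} from the equation (your ``time-dependent Hamiltonian'' step is exactly the paper's pairing with $\partial_\tau \overline{v}$ and taking real parts), transfer back to $u$ via the norm dictionary \eqref{eq.mrel21} with $d\tau=(1+t)^{-2}dt$, and justify the formal differentiation by approximating with smoother data.
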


\begin{remark}
    Since we use pseudoconformal transformation, we follow \cite{FM17} and the method of the proof in this work is based on the approach in \cite{O06}.
\end{remark}

\subsection{Pseudoconformal transformation}
The first known application of the pseudoconformal transformation to the NLS dates back to the 1980s,  (see \cite{GV1980}, \cite{TY84},  \cite{W85}, \cite{CW92}).
We
use  the  following pseudoconformal transformation $(t,x,u) \mapsto (\tau,\xi,v)$
defined by
\begin{equation}\label{eq.pct89f}
\begin{aligned}
   & t=\frac{\tau}{1-\tau},\ x = \frac{\xi}{1-\tau},\\
    & u(t,x) =(1+t)^{-n/2} v\left(\frac{t}{1+t},\frac{x}{1+t} \right)  e^{\frac{ix^2}{2(1+t)}}
    \end{aligned}
\end{equation}
with inverse transformation given by
\begin{equation}\label{eq.pct90f}
\begin{aligned}
   & \tau = \frac{t}{1+t},\ \xi = \frac{x}{1+t},\\
    &  v(\tau,\xi) =    u\left(\frac{\tau}{1-\tau},\frac{\xi}{1-\tau}\right) (1-\tau)^{-n/2} e^{-\frac{i\xi^2}{2(1-\tau)}}.
    \end{aligned}
\end{equation}

If $u$ is a solution to \eqref{eq.NLSf1}, then
we can apply the pseudoconformal transformation and we can obtain
\begin{equation}\label{eq.meq50}
    \begin{aligned}
    & (i \partial_\tau + \frac{1}{2}\Delta_\xi)v = \frac{v(\tau)|v(\tau)|^{2/n}}{1-\tau}, \ \ \tau \in [0,1),\\
    & v(0)= v_0 ,
    \end{aligned}
\end{equation}
where
\begin{equation}\label{eqv0}
    v_0(x) = u_0(x) e^{-ix^2/2}.
\end{equation}
\begin{lem}
   We have the relations
\begin{equation}\label{eq.mrel21}
\begin{aligned}
  &\|u(t)\|_{L^2} = \|v(\tau)\|_{L^2}, \ \\
  & \left\|u\left(t\right)\right\|^{2(n+1)/n}_{L^{2(n+1)/n}} =    (1+t)^{-1} \left\|v\left(\tau \right)\right\|^{2(n+1)/n}_{L^{2(n+1)/n}}, \\
  &  \| \xi v(\tau)\|_{ L^2} =  (1+t)^{-1} \| x u(t)\|_{ L^2},  \\
  & \| \nabla_{\xi} v(\tau)\|_{ L^2} = \|J(t+1)u(t)\|_{ L^2},
\end{aligned}
\end{equation}
where
\begin{equation}
   \tau =  \frac{t}{1+t}, \ \     J(t+1) = x+i(1+t)\nabla.
    \end{equation}
\end{lem}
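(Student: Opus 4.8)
The plan is to derive all four identities from the single change of variables $\xi = x/(1+t)$, under which $dx = (1+t)^n\, d\xi$, together with the observation that the phase factor $e^{ix^2/(2(1+t))}$ appearing in \eqref{eq.pct89f} has modulus one and therefore disappears from every $L^p$ computation. Writing the ansatz as $u(t,x) = (1+t)^{-n/2} e^{ix^2/(2(1+t))} v(\tau, x/(1+t))$ with $\tau = t/(1+t)$, one has $|u(t,x)| = (1+t)^{-n/2} |v(\tau, x/(1+t))|$, which is the starting point for the first three relations.

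For the first identity I would compute $\|u(t)\|_{L^2}^2 = \int_{\mathbb{R}^n} (1+t)^{-n} |v(\tau, x/(1+t))|^2\, dx$ and apply the substitution, the factor $(1+t)^{-n}$ cancelling exactly against the Jacobian $(1+t)^n$; this gives $\|u(t)\|_{L^2} = \|v(\tau)\|_{L^2}$. The second identity follows by the same substitution with $p = 2(n+1)/n$: the prefactor becomes $(1+t)^{-np/2}(1+t)^n$ with $np/2 = n+1$, hence $(1+t)^{-(n+1)+n} = (1+t)^{-1}$, which is the claimed factor. For the third identity I would insert $|x|^2 = (1+t)^2|\xi|^2$ after the substitution, producing the factor $(1+t)^2$, so that $\|xu(t)\|_{L^2}^2 = (1+t)^2\|\xi v(\tau)\|_{L^2}^2$, which rearranges to the stated relation.

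The fourth identity is the only one requiring a genuine computation and is the heart of the lemma. I would apply $J(t+1) = x + i(1+t)\nabla_x$ directly to the ansatz. Differentiating through the Gaussian phase produces two contributions: $\nabla_x u$ generates a term carrying $\tfrac{ix}{1+t}$ from the phase and a term $\tfrac{1}{1+t}(\nabla_\xi v)(\tau, x/(1+t))$ from the profile. Multiplying by $i(1+t)$ turns the first into $-x v$, which cancels exactly against the $xu$ term in $J(t+1)u$; what survives is
\begin{equation}
J(t+1)u(t,x) = i\,(1+t)^{-n/2} e^{ix^2/(2(1+t))} (\nabla_\xi v)\left(\tau, \tfrac{x}{1+t}\right).
\end{equation}
Taking the $L^2$ norm, the modulus-one phase and the unimodular factor $i$ drop out, and the same substitution $\xi = x/(1+t)$ cancels the scaling prefactor against the Jacobian, yielding $\|J(t+1)u(t)\|_{L^2} = \|\nabla_\xi v(\tau)\|_{L^2}$.

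The main (and essentially the only) subtlety is verifying that the $x$-dependent terms cancel precisely in the step above. This is exactly the structural reason the pseudoconformal transformation is adapted to the generator $J(t+1)$, equivalently the intertwining $J(t+1) = M(1+t)\, i(1+t)\nabla\, M(-(1+t))$ recorded earlier; once this cancellation is observed, the remaining manipulations are routine scaling identities identical in spirit to those used for the first three relations.
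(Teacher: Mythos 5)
Your proposal is correct and follows essentially the same route as the paper: the first three identities are the identical routine substitution $\xi = x/(1+t)$ with the unimodular phase dropping out, and your key cancellation formula $J(t+1)u = i\,(1+t)^{-n/2}e^{ix^2/(2(1+t))}(\nabla_\xi v)\bigl(\tau, \tfrac{x}{1+t}\bigr)$ is exactly the paper's relation $(1-\tau)^{n/2}\nabla_\xi v(\tau,\xi) = e^{-ix^2/(2(1+t))}\bigl(-ix+(1+t)\nabla_x\bigr)u(t,x)$ read in the forward direction rather than inverted. There are no gaps.
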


\begin{proof}
From \eqref{eq.pct89f}  we have

\begin{equation}\label{eq.PCT1}
    \ |u(t,x)| =  \left|v\left(\frac{t}{1+t},\frac{x}{1+t}\right)\right| (1+t)^{-n/2}= \left|v\left(\tau,\frac{x}{1+t}\right)\right| (1+t)^{-n/2},
\end{equation}
and
we find
\begin{equation}
    \begin{aligned}
         & \int_{\mathbb{R}^n} |u(t,x)|^2 dx = \int_{\mathbb{R}^n} (1+t)^{-n}\left|v\left(\tau,\frac{x}{1+t}\right)\right|^2 dx =\left\|v\left(\tau \right)\right\|^2_{L^2}
    \end{aligned}
\end{equation}
and
\begin{equation}
    \begin{aligned}
         &\int_{\mathbb{R}^n} |u(t,x)|^q dx =  \int_{\mathbb{R}^n}(1+ t)^{-nq/2}\left|v\left(\tau,\frac{x}{1+t}\right)\right|^q dx\\
         &= (1+t)^{n-nq/2} \left\|v\left(\tau \right)\right\|^q_{L^q}.
    \end{aligned}
\end{equation}
Taking  $q= 2+2/n =2(n+1)/n ,$ we find
\begin{equation}
\left\|u\left(t\right)\right\|^{2(n+1)/n}_{L^{2(n+1)/n}} =    (1+t)^{-1} \left\|v\left(\tau \right)\right\|^{2(n+1)/n}_{L^{2(n+1)/n}}.
\end{equation}
In a similar way, we have
\begin{equation}
    \begin{aligned}
         & \int_{\mathbb{R}^n} x^2|u(t,x)|^2 dx = \int_{\mathbb{R}^n} (1+t)^{-n+2}\left|\frac{x}{1+t} v\left(\tau,\frac{x}{1+t}\right)\right|^2 dx \\
         & =(1+t)^2\left\||\xi|v\left(\tau\right)\right\|^2_{L^2}.
    \end{aligned}
\end{equation}
Finally, we have
\begin{equation}\label{eq.mrel22}
\begin{aligned}
   &   \nabla_x u(t,x) = \\
   & =  e^{i\xi^2/2(1-\tau)} (1-\tau)^{n/2} \left(i\xi+ (1-\tau)\nabla_\xi \right)v\left( \tau, \xi \right),\\
   & (1-\tau)^{n/2}\nabla_\xi v(\tau,\xi) \\
   & = e^{-ix^2/(2(1+t))}   \left(-ix +(1+t)\nabla_x  \right)u(t,x) .    \\
\end{aligned}
\end{equation}

Hence,
\begin{equation}
    \begin{aligned}
  & \int_{\mathbb{R}^n} |\nabla_\xi v(\tau,\xi)|^2 d\xi\\
  & = \int_{\mathbb{R}^n} |\left(x +i (1+t)\nabla_x  \right)u(t,x)|^2 dx = \|J(1+t)u(t)\|_{L^2}^2. \\
    \end{aligned}
\end{equation}

\end{proof}

\begin{remark}
   The above relations \eqref{eq.mrel21} and \eqref{eq.mrel22} show that
\begin{equation}\label{eq.ind12}
   u_0 \in \mathcal{H}^{1,1}   \ \ \mbox{if and only if } \ \  v_0(x) = u_0(x) e^{-ix^2/2} \in \mathcal{H}^{1,1}.
\end{equation}
\end{remark}

After the pseudoconformal transformation we find the Cauchy problem \eqref{eq.meq50} for $v.$
The precise derivation of energy conservation can be done by following \cite{O06}, \cite{FM17}.

\subsection{Conservation of the pseudoconformal energy}

In alternative way we can approximate the initial data with data in $H^2$ we can assume that
$$ v \in C([0,1);H^2(\mathbb{R}^n)) \cap C^1([0,1);L^2(\mathbb{R}^n)).$$
Multiplying (in $L^2$) the equation
\begin{equation}\label{eq.meq93}
  (1-\tau)  (i \partial_\tau +\frac{1}{2}\Delta_\xi)v = v|v|^{2/n}
\end{equation}
by $\partial_t \overline{v}$ and taking the real part, we find
\begin{equation}\label{eq.meq7}
    \frac{(1-\tau)}{4} \frac{d}{d\tau} \|\nabla_\xi v(\tau)\|_{L^2}^2 + \frac{d}{d\tau}\frac{n}{2(n+1)} \|v(\tau)\|_{L^{2(n+1)/n}}^{2(n+1)/n} = 0
\end{equation}
or
\begin{equation}\label{eq.tau1}
\begin{aligned}
   & \frac{d}{d\tau} \left(  \frac{(1-\tau)}{4}  \|\nabla v(\tau)\|_{L^2}^2 + \frac{n}{2(n+1)} \|v(\tau)\|_{L^{2(n+1)/n}}^{2(n+1)/n}\right)\\
    &+\frac{1}{4} \|\nabla u(\tau)\|_{L^2}^2  = 0.
\end{aligned}
\end{equation}
Passing to $t=\tau/(1-\tau),$ we find
$\partial_\tau = (1+t)^2 \partial_t$, so
\begin{equation}\label{eq.meq7mmm}
\begin{aligned}
    &\frac{(1+t)^2}{4} \frac{d}{dt} \left(\frac{1}{1+t}\|J(1+t)u(t)\|_{L^2}^2 \right) \\
    &+ (1+t)^2 \frac{d}{dt}\left(\frac{n(1+t)}{2(n+1)} \|u(t)\|_{L^{2(n+1)/n}}^{2(n+1)/n}\right)\\
    &+ \frac{1}{4}\|J(1+t)u(t)\|_{L^2}^2  = 0
\end{aligned}
\end{equation}
and
\begin{equation}\label{eq.tau2}
\begin{aligned}
   &\frac{d}{dt} \left(  \frac{1}{4(1+t)}  \|J(1+t)u(t)\|_{L^2}^2 + \frac{n(1+t)}{2(n+1)} \|u(t)\|_{L^{2(n+1)/n}}^{2(n+1)/n}\right)\\
    &+\frac{1}{4(1+t)^2} \|J(1+t)u(t)\|_{L^2}^2  = 0.
\end{aligned}
\end{equation}

Integrating \eqref{eq.tau1} from $0$ to $\tau$, we find
\begin{equation}
\begin{aligned}
  &  (1-\tau)  \|\nabla v(\tau)\|_{L^2}^2+\frac{2n}{(n+1)} \|v(\tau)\|_{L^{2(n+1)/n}}^{2(n+1)/n} + \int_0^\tau \|\nabla v(\theta)\|_{L^2}^2 d \theta =\\
  &   \|\nabla v(0)\|_{L^2}^2+ \frac{2n}{(n+1)} \|v(0)\|_{L^{2(n+1)/n}}^{2(n+1)/n}.
\end{aligned}
\end{equation}
Integrating \eqref{eq.tau2}, we obtain
\begin{equation}\label{eq.rr86}
\begin{aligned}
   & (1+t)^{-1}  \|J(1+t) u(t)\|^2_{L^2}+ \frac{2 n}{(n+1)} (1+t) \|u(t)\|_{L^{2(n+1)/n}}^{2(n+1)/n}\\
   & + \int_0^t\|J(1+\sigma)u(\sigma)\|^2_{L^2}(1+\sigma)^{-2} d\sigma\\
   & =\|J(1)u(0)\|^2_{L^2} +\frac{2 n}{(n+1)}  \|u(0)\|_{L^{2(n+1)/n}}^{2(n+1)/n}.
\end{aligned}
\end{equation}

Note that the proof gives the following energy estimate for $v.$
\begin{lem}\label{lem.L1.1}
Let  $v$ be a classical solution to \eqref{eq.meq50} with initial data in $H^1.$ Then for any $\tau \in [0,1)$ we have the identity
 \begin{equation}\label{eq.ee5}
\begin{aligned}
    & (1-\tau)  \|\nabla v(\tau)\|^2_{L^2}+ \frac{2n}{n+1}  \|v(\tau)\|^{2+2/n}_{L^{2+2/n}}\\
    & + \int_0^\tau \|\nabla v(\sigma)\|^2_{L^2} d\sigma  = C_* ,  \\
    & C_* =     \|\nabla v(0)\|^2_{L^2} + \frac{2n}{n+1}  \|v(0)\|^{2+2/n}_{L^{2+2/n}}.
\end{aligned}
\end{equation}
\end{lem}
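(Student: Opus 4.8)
The plan is to run the standard pseudoconformal energy argument directly on the transformed equation \eqref{eq.meq93}, namely $(1-\tau)(i\partial_\tau + \tfrac12\Delta_\xi)v = v|v|^{2/n}$, and then integrate in $\tau$. Concretely, I would pair the equation in $L^2(\mathbb{R}^n)$ with $\partial_\tau \overline{v}$ and take the real part. The term coming from $i\partial_\tau v$ equals $i(1-\tau)\|\partial_\tau v\|_{L^2}^2$, which is purely imaginary and therefore disappears after taking $\mathrm{Re}$. This is the mechanism that turns a first-order-in-time equation into an exact energy balance, so that only the kinetic and potential contributions survive.

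The two surviving contributions are computed by recognizing them as total $\tau$-derivatives. For the Laplacian term, an integration by parts in $\xi$ gives $\tfrac{1-\tau}{2}\,\mathrm{Re}\int \Delta_\xi v\,\partial_\tau\overline{v}\,d\xi = -\tfrac{1-\tau}{2}\,\mathrm{Re}\int \nabla_\xi v\cdot\partial_\tau\nabla_\xi\overline{v}\,d\xi = -\tfrac{1-\tau}{4}\tfrac{d}{d\tau}\|\nabla_\xi v\|_{L^2}^2$, using $\mathrm{Re}(\nabla v\cdot\partial_\tau\nabla\overline{v}) = \tfrac12\partial_\tau|\nabla v|^2$. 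For the nonlinear term I would use the chain rule $\partial_\tau|v|^{2(n+1)/n} = \tfrac{2(n+1)}{n}|v|^{2/n}\mathrm{Re}(v\,\partial_\tau\overline{v})$, which identifies $\mathrm{Re}\int v|v|^{2/n}\partial_\tau\overline{v}\,d\xi = \tfrac{n}{2(n+1)}\tfrac{d}{d\tau}\|v\|_{L^{2(n+1)/n}}^{2(n+1)/n}$. Combining these reproduces exactly the pointwise-in-$\tau$ identity \eqref{eq.meq7}.

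The final step is to integrate \eqref{eq.meq7} in $\tau$. The only nontrivial point is the factor $(1-\tau)$ weighting $\tfrac{d}{d\tau}\|\nabla_\xi v\|_{L^2}^2$; I would absorb it via $\tfrac{1-\tau}{4}\tfrac{d}{d\tau}\|\nabla v\|_{L^2}^2 = \tfrac{d}{d\tau}\bigl(\tfrac{1-\tau}{4}\|\nabla v\|_{L^2}^2\bigr) + \tfrac14\|\nabla v\|_{L^2}^2$, which produces the divergence form \eqref{eq.tau1} with the genuinely positive integrated term $\tfrac14\int_0^\tau\|\nabla v\|_{L^2}^2$. Integrating \eqref{eq.tau1} from $0$ to $\tau$ and multiplying by $4$ then yields \eqref{eq.ee5} with the stated constant $C_*$; this is precisely the identity displayed immediately before the statement.

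The main obstacle is not the algebra but its justification: pairing with $\partial_\tau\overline{v}$ and integrating by parts presupposes that $v(\tau)$ is regular enough (essentially $v\in C([0,1);H^2)\cap C^1([0,1);L^2)$) for every integral and the chain rule to be legitimate, whereas the lemma is stated for $H^1$ data. I would handle this by first proving \eqref{eq.ee5} for smooth/$H^2$ approximating data $v_{0,k}\to v_0$ in $H^1$, where the computation above is rigorous, and then passing to the limit, using the continuous dependence of the solution together with the identity itself (which furnishes uniform control of $\|\nabla v\|_{L^2}$ and $\|v\|_{L^{2(n+1)/n}}$) to upgrade to $H^1$ data. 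A minor additional point to verify is that $|v|^{2(n+1)/n}$ is $C^1$ in $v$ even where $v$ vanishes, which holds because the exponent $2(n+1)/n = 2+2/n$ exceeds $2$.
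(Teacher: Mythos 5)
Your proposal is correct and follows essentially the same route as the paper: multiply \eqref{eq.meq93} by $\partial_\tau\overline{v}$, take real parts to get \eqref{eq.meq7}, pass to the divergence form \eqref{eq.tau1}, integrate in $\tau$, and justify the computation by approximating with $H^2$ data so that $v \in C([0,1);H^2)\cap C^1([0,1);L^2)$. Your limiting argument for genuinely $H^1$ data is slightly more explicit than the paper's, but the substance is identical.
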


\begin{proof}[Proof of Propostition \ref{lem.mapes}]

From \eqref{eq.rr86} we conclude that \eqref{CPCE1} is true.

\end{proof}

\section{Proof of Theorem \ref{MT2}}
We have the following relation.
\begin{equation}
    \begin{aligned}
        &\left( u^+ |\partial_j \varphi \right)= \left( \left. u^+- U(-t) e^{i \Phi_t(u)} u(t)    \right\vert \partial_j \varphi \right)\\
        & -
        \left(\partial_j \left(\left. U(-t) e^{i \Phi_t(u)} u(t)\right)    \right\vert  \varphi \right).
    \end{aligned}
\end{equation}
Hence, we have the estimates
\begin{equation}\label{eq.f1}
    \begin{aligned}
       & \left| \left( u^+ |\partial_j \varphi \right) \right| \leq \left\|  u^+- U(-t) e^{i \Phi_t(u)} u(t)  \right\|_{L^2} \|\partial_j \varphi\|_{L^2}\\
        &+
        \left(\|\partial_j \Phi_t\|_{L^2} \|u(t)\|_{L^\infty} + \|\partial_j u(t)\|_{L^2} \right) \|\varphi\|_{L^2}.
    \end{aligned}
\end{equation}
Next, we use the relations
\begin{equation}
    \begin{aligned}
        &\partial_j \left|u\left(\tau, \frac{\tau x}{t} \right)\right|^{2/n} = \frac{2\tau}{n t} \left|u\left(\tau, \frac{\tau x}{t} \right)\right|^{2/n-2}
        \mathrm{Re} \left(\overline{u\left(\tau, \frac{\tau x}{t} \right)} \ \partial_j u\left(\tau, \frac{\tau x}{t} \right)  \right)\\
      &  = -\frac{2}{n t} \left|u\left(\tau, \frac{\tau x}{t} \right)\right|^{2/n-2}
        \mathrm{Im } \left(\overline{u\left(\tau, \frac{\tau x}{t} \right)} \ J_j(\tau) u\left(\tau, \frac{\tau x}{t} \right)  \right),
    \end{aligned}
\end{equation}
where
\begin{equation} \label{eq.sc3}
    J_j(\tau)u(\tau,x) = (i\tau  \partial_{x_j} + x_j)u(\tau,x).
\end{equation}
 Hence, we have
\begin{equation}
    \begin{aligned}
     &   \|\partial_j \Phi_t \|_{L^2} \leq \frac{2}{n t} \int_1^t \|u(\tau)\|_{L^\infty}^{2/n-1} \|J_j  u\left(\tau, \frac{\tau \  \cdot}{t}\right)\|_{L^2} d \tau\\
        & \leq \frac{2}{n t} \int_1^t \|u(\tau)\|_{L^\infty}^{2/n-1} \left(\frac{\tau}{t}\right)^{-n/2}\|J_j(\tau)  u\left(\tau\right)\|_{L^2} d \tau\\
        & \leq Ct^{n/2-1} \int_1^t \tau^{-1}  \|J_j(\tau)  u\left(\tau\right)\|_{L^2} d \tau \lesssim t^{(n-1)/2},
    \end{aligned}
\end{equation}
where in the last step, we have used the  estimates of Proposition \ref{lem.mapes}.
From \eqref{eq.f1} we now obtain
\begin{equation}\label{eq.f2}
    \begin{aligned}
       & \left| \left( u^+ |\partial_j \varphi \right) \right| \leq \left\|  u^+- U(-t) e^{i \Phi_t(u)} u(t)  \right\|_{L^2} \|\partial_j \varphi\|_{L^2}\\
        &+
        \left(Ct^{-1/2} + 2E(u_0) \right) \|\varphi\|_{L^2}
    \end{aligned}
\end{equation}
so taking the limit $t \to +\infty$ we arrive at
\begin{equation}\label{eq.f3}
    \begin{aligned}
       & \left| \left( u^+ |\partial_j \varphi \right) \right|\lesssim
         2E(u_0)  \|\varphi\|_{L^2}
    \end{aligned}
\end{equation}
and we can conclude that $u^+$ is in $H^1.$

In order to show the required estimate \eqref{mwoe60m}, we approximate $\varphi \in H^1$ with a sequence in $H^2,$ so that
\begin{equation}
    \begin{aligned}
        \varphi_\varepsilon \in H^2, \ \ \lim_{\varepsilon \to 0} \|\varphi_\varepsilon - \varphi\|_{H^1} =0.
    \end{aligned}
\end{equation}
As above, we have the estimates
\begin{equation}\label{eq.f4}
    \begin{aligned}
       & \left| \left( \nabla \left( U(-t) e^{i \Phi_t(u)} u(t) - u^+\right)  |\nabla \varphi \right) \right| \\
       &\leq \left(\left\|  \nabla U(-t) e^{i \Phi_t(u)} u(t) \right\|_{L^2} + \|\nabla u^+\|_{L^2}\right) \|\nabla(\varphi_\varepsilon - \varphi)\|_{L^2} \\
       & + \left\| U(-t) e^{i \Phi_t(u)} u(t) - u^+ \right\|_{L^2} \|\Delta \varphi_\varepsilon \|_{L^2} \\
       & \leq C\left(t^{-1/2} + E(u_0)\right) \|\nabla(\varphi_\varepsilon - \varphi)\|_{L^2} \\
       &+ \left\| U(-t) e^{i \Phi_t(u)} u(t) - u^+ \right\|_{L^2} \|\Delta \varphi_\varepsilon \|_{L^2} .
    \end{aligned}
    \end{equation}
This leads to the estimate
\begin{equation}
    \begin{aligned}
        \limsup_{t \to +\infty} \left| \left( \nabla \left( U(-t) e^{i \Phi_t(u)} u(t) - u^+\right)  |\nabla \varphi \right) \right| \leq CE(u_0) \|\nabla (\varphi - \varphi_\varepsilon)\|_{L^2}
    \end{aligned}
\end{equation}
so we have \eqref{mwoe60m}.

\section{$L^\infty$ bounds}
We assume that $u$ is solution to the Cauchy problem \eqref{eq.NLSf1} with initial data in $\mathcal{H}^{1,1}.$

We have the following conservation laws.

\begin{itemize}
    \item Charge: $$ \|u(t)\|_{L^2}^2 = \|u_0\|_{L^2}^2.$$

    \item Energy : $$ \|\nabla u(t)\|_{L^2}^2 + \frac{2n}{n+1} \|u(t)\|_{L^{2+2/n}}^{2+2/n} = \|\nabla u_0\|_{L^2}^2 + \frac{2n}{n+1} \|u_0\|_{L^{2+2/n}}^{2+2/n}.$$

    \item Pseudo - conformal charge:
    \begin{equation}
    \begin{aligned}
           & (1+t)^{-1} \|J(1+t) u(t)\|_{L^2}^2 + \frac{2n}{n+1} (1+t) \|u(t)\|_{L^{2+2/n}}^{2+2/n}\\
           &+ \int_0^t (1+s)^{-2} \|J(s+1)u(s)\|_{L^2}^2 ds \\
            &=  \|J(1) u(0)\|_{L^2}^2 + \frac{2n}{n+1}  \|u(0)\|_{L^{2+2/n}}^{2+2/n},
        \end{aligned}
    \end{equation}
    where
    \begin{equation}
        J(t) = x+it\nabla = U(t)xU(-t) = M(t)it \nabla M(-t).
    \end{equation}
\end{itemize}

\begin{proof}[Proof of Theorem \ref{T.1}]
   Now we use the estimates \footnote{We use the following estimates. For any $\varphi \in H^1(\mathbb{R})$
   \begin{equation}
       \begin{aligned}
           &|\varphi(x)|^p = - \int_x^{+\infty}  \frac{d}{dy}  \left(|\varphi(y)|^p \right) dy \\
           &\leq p \int_{-\infty}^{+\infty} |\varphi|^{p-1} |\varphi^\prime| dy \leq p \|\varphi\|_{L^{2p-2}}^{p-1} \|\varphi^\prime\|_{L^2} . \ \
       \end{aligned}
 \end{equation}
 Hence
 \begin{equation}
     \|\varphi\|_{L^\infty} \leq p^{1/p}  \|\varphi\|_{L^{2p-2}}^{1-1/p} \|\varphi^\prime\|_{L^2}^{1/p}.
 \end{equation}

   }
\begin{equation}
    \begin{aligned}
        &\|u(t)\|_{L^\infty}^3 = \|M(-t)u(t)\|_{L^\infty}^3 \leq 3 \|M(-t)u(t)\|^2_{L^4} \|\nabla M(-t)u\|_{L^2} \\
        & = 3 \|u(t)\|^2_{L^4} \left(t^{-1} \|J(t)u(t)\|_{L^2}\right) \leq C (t^{-1/4})^2 (t^{-1} t^{1/2} ) = C t^{-1}
    \end{aligned}
\end{equation}
for $ t \geq 1.$
Hence we have \eqref{13}.
\end{proof}

\begin{proof}[Proof of Theorem \ref{T.2}]
    We take $u_0 \in \mathcal{H}^{1,1}$ radial. Then for any $x \in \mathbb{R}^2 \setminus \{0\}$, $p \geq 2,$ and $t \geq 1$ we have \footnote{
Here for any $\varphi(r) \in H^1(\mathbb{R}),$ any $r>0$ and any $p \geq 2$ we use the inequalities
\begin{equation}
    \begin{aligned}
        &|\varphi(r)|^p \leq p \left( \int_r^{+\infty} |\varphi(p)|^{2p-2} dp\right)^{1/2} \left( \int_r^{+\infty} |\varphi^\prime(p)|^{2} dp\right)^{1/2} \\
        &\leq \frac{p}{r} \left( \int_r^{+\infty}  |\varphi(p)|^{2p-2} p dp\right)^{1/2} \left( \int_r^{+\infty} |\varphi^\prime(p)|^{2} p dp\right)^{1/2}.
    \end{aligned}
\end{equation}

Hence, for $\varphi \in H^1_{rad}(\mathbb{R}^2)$ and any $x \in \mathbb{R}^2 \setminus \{0\} $ we have
\begin{equation}
    \begin{aligned}
       |\varphi(x)|^p \leq \frac{p}{|x|} \|\varphi\|_{L^{2p-2}}^{p-1} \|\nabla \varphi\|_{L^2} .
    \end{aligned}
\end{equation}

On the other hand,  there exists a constant $C\geq 1,$ so that for any $q\in [2,+\infty)$ and any $\varphi \in H^1(\mathbb{R}^2)$
we have
\begin{equation}
    \begin{aligned}
        \|\varphi\|_{L^q} \leq C q^{1/2} \|\varphi\|_{L^2}^{2/q} \|\nabla \varphi \|_{L^2}^{1-2/q}.
    \end{aligned}
\end{equation}

    }
\begin{equation}
    \begin{aligned}
       & |u(t,x)|^p = |M(-t)u(t,x)|^p \leq \frac{p}{|x|} \|M(-t)u(t)\|_{L^{2p-2}}^{ p-1} \|\nabla M(-t)u(t)\|_{L^{2}}.
    \end{aligned}
\end{equation}
Using the fact that
\begin{equation}
    \begin{aligned}
       &  \|M(-t)u(t)\|_{L^{2p-2}}  \\
        & \leq C (2p-2)^{1/2}  \|M(-t)u(t)\|_{L^{2}}^{2/(2p-2)} \|\nabla M(-t)u(t)\|_{L^{2}}^{1-2/(2p-2)},
    \end{aligned}
\end{equation}
we obtain

\begin{equation}
    \begin{aligned}
        & |u(t,x)|^p \\
        & \leq  \frac{p}{|x|} C^{p-1} 2^{(p-1)/2} p^{(p-1)/2}\|u_0\|_{L^2}  \|\nabla M(-t)u(t)\|_{L^{2}}^{p-1}\\
        & \leq  \frac{p}{|x|} C^{p-1} 2^{(p-1)/2} p^{(p-1)/2}\|u_0\|_{L^2} \left( t^{-1} \|J(t)u(t)\|_{L^{2}}\right)^{p-1}\\
        &\leq \frac{p}{|x| t^{p-1}} C^{p} (2p)^{p/2} \|u_0\|_{L^2}   \|J(t)u(t)\|_{L^{2}}^{p-1}.
    \end{aligned}
\end{equation}
for any $t \geq 1.$
In this way, we find
\begin{equation}
    |u(t,x)| \leq \left(  \frac{p}{|x|} \|u_0\|_{L^2} \right)^{1/p} C (2p)^{1/2} t^{-(p-1)/(2p)}.
\end{equation}
Substitution $\varepsilon = 1/p$ implies \eqref{14}.

\end{proof}

\section{Weak convergence of modified wave operators}

In this section we prove Theorem \ref{th5} that is a generalization of a result in \cite{H95} obtained for the critical Hartree equation in dimensions $n \geq 2.$
Namely, we consider the case $n=2$ and the Cauchy problem

\begin{equation}\label{eq.NLS2a}
    \begin{aligned}
    & (i \partial_t + \frac{1}{2}\Delta_x)u = u |u|, \ \ t \geq 0, x \in \mathbb{R}^2, \\
    & u(0)= u_0.
    \end{aligned}
\end{equation}
As in the work \cite{H95} we set
\begin{equation}
    \Phi_t(u)(x) = \int_1^{+\infty} \left|u\left(\tau, \frac{\tau}{t} x \right)\right| d\tau.
\end{equation}

\begin{proof}[Proof of Theorem \ref{th5}]
  As before, (see \eqref{eq.defvt5} in  Section \ref{s.3})  we set
\begin{equation}\label{eq.defvt5m}
    v(t) = M(t) U(-t) \exp \left( i \Phi_t(u)\right)\  u(t) .
\end{equation}
Lemma \ref{mle60} guarantees that
   \begin{equation}
       \mathcal{F} M(t) U(-t) (g u(t)) = \tilde{g}   \mathcal{F} M(t) U(-t)  u(t) , \ \ \ \tilde{g}(x) = g(x t).
   \end{equation}
   Then we have \eqref{eq.meme0}
   and we can refer to Corollary \ref{cor3.1} that implies
  \begin{equation}\label{eq.c3.1}
    \begin{aligned}
    & i \frac{d}{dt} v(t)  = \mathcal{F}^{-1}  \exp \left(i \int_0^t |u(\tau, \tau x)| d\tau \right) \frac{(-\Delta)}{2t^2}\mathcal{F} M(t) U(-t) u(t).
    \end{aligned}
\end{equation}

For any $t>s\geq 1$ and for any $\varphi \in \mathcal{S}(\mathbb{R}^2)$ we obtain

\begin{equation}
    \begin{aligned}
       &  ( v(t)-v(s)| \varphi)= -i \int_s^t  ( iv^\prime(\tau)| \varphi) d\tau \\
       & =  \frac{i}{2} \int_s^t \tau^{-2} \left(  \mathcal{F}^{-1}  \exp \left(i \tilde{\Phi}_\tau  \right) \Delta \mathcal{F} \mathbb{W}(\tau)| \varphi  \right ) d\tau\\
       & =\frac{i}{2} \int_s^t \tau^{-2} \left(   \exp \left(i \tilde{\Phi}_\tau  \right) \Delta \mathcal{F} \mathbb{W}(\tau)| \mathcal{F}(\varphi)  \right ) d\tau = \\
       & = -\frac{i}{2} \int_s^t \tau^{-2} \left(   \exp \left(i \tilde{\Phi}_\tau  \right) \nabla \mathcal{F} \mathbb{W}(\tau)| \nabla \mathcal{F}(\varphi)  \right ) d\tau  \\
       &- \frac{i}{2} \int_s^t \tau^{-2} \left( \nabla \left(  \exp \left(i \tilde{\Phi}_\tau  \right)\right)  \nabla \mathcal{F} \mathbb{W}(\tau)| \mathcal{F}(\varphi)  \right ) d\tau =: I + II,
    \end{aligned}
\end{equation}
where
\begin{equation}
    \mathbb{W}(t) = M(t)U(-t) u(t)
\end{equation}
and
\begin{equation}\label{tild1m}
    \tilde{\Phi}_t(x) = \int_1^t  |u(\tau, \tau x)| d \tau.
\end{equation}
Further we estimate each of the terms $I$ and $II.$
\begin{equation}
\begin{aligned}
    &|I| \leq \int_s^t \tau^{-2} \|J(\tau)u(\tau)\|_{L^2}  d\tau \| \nabla \mathcal{F}(\varphi)  \|_{L^2}  \\
    & \leq \left( \int_s^t \tau^{-2} \right)^{1/2} \left( \int_s^t \tau^{-2} \| J(\tau)u(\tau)\|^2_{L^2}  d\tau\right)^{1/2}
    \| \nabla \mathcal{F}(\varphi)  \|_{L^2}\\
    & \leq \left( \frac{1}{s}-\frac{1}{t} \right)^{1/2}  \left( \int_s^t \tau^{-2} \| J(\tau)u(\tau)\|^2_{L^2}  d\tau\right)^{1/2}
    \| \nabla \mathcal{F}(\varphi)  \|_{L^2}.
\end{aligned}
\end{equation}
Inequality \eqref{eq.jco29} guarantees the smallness of $I.$

The gradient of the phase factor in $II$ is estimated as follows.
\begin{equation}
    \begin{aligned}
       &\nabla  \exp \left(i \tilde{\Phi}_t  \right)  =  i \exp \left(i \tilde{\Phi}_t  \right)  \nabla \int_1^t \left|u\left(\sigma,\sigma x \right)\right| d\sigma \\
       =&  i \exp \left(i \tilde{\Phi}_t  \right) \mathrm{Re} \int_1^t \sigma \frac{\overline{u\left(\sigma,\sigma x \right)}}{\left| u\left(\sigma,\sigma x \right)\right|} \nabla  u\left(\sigma,\sigma x \right) d\sigma \\
       &= i \exp \left(i \tilde{\Phi}_t  \right) \mathrm{Im} \int_1^t  \frac{\overline{u\left(\sigma,\sigma x \right)}}{\left| u\left(\sigma,\sigma x \right)\right|} J(\sigma)  u\left(\sigma,\sigma x \right) d\sigma
    \end{aligned}
\end{equation}
and we obtain
\begin{equation}
    \begin{aligned}
      &\left\| \nabla  \exp \left(i \tilde{\Phi}_\tau  \right)     \right\|_{L^2}  \leq \int_1^\tau \left\| J(\sigma)  u\left(\sigma, \sigma \cdot\right) \right\|_{L^2} d \sigma \\
      & = \int_1^\tau \sigma^{-1}\left\| J(\sigma)  u\left(\sigma\right) \right\|_{L^2} d \sigma.
    \end{aligned}
\end{equation}

We have also
\begin{equation}
    \begin{aligned}
        &\left\|\nabla \mathcal{F} \mathbb{W}(\tau)\right\|_{L^2} \lesssim  \left\|\nabla \mathcal{F}M(\tau) U(-t)u(\tau)\right\|_{L^2} \lesssim \|J(\tau)u(\tau)\|_{L^2}.
    \end{aligned}
\end{equation}
In this way, we can obtain
\begin{equation}
    \begin{aligned}
     & |II|  = \left| \frac{1}{2} \int_s^t \tau^{-2} \left( \nabla \left(  \exp \left(i \tilde{\Phi}_\tau  \right)\right)  \nabla \mathcal{F} \mathbb{W}(\tau)| \mathcal{F}(\varphi)  \right ) d\tau \right|  \\
      & \lesssim \int_s^t \tau^{-2} \left\| \nabla \left(  \exp \left(i \tilde{\Phi}_\tau  \right)\right)\right\|_{L^2} \left\|\nabla \mathcal{F}\mathbb{W}(\tau)\right\|_{L^2} \| \|\mathcal{F}(\varphi)\|_{L^\infty}   d\tau\\
      & \lesssim \left[\int_s^t \tau^{-2} \left( \int_1^\tau \sigma^{-1}\left\| J(\sigma)  u\left(\sigma\right) \right\|_{L^2} d \sigma \right) \|J(\tau)u(\tau)\|_{L^2} d\tau \right]  \|\mathcal{F}(\varphi)\|_{L^\infty} \\
      & \lesssim \left(\int_s^t \tau^{-2} \left( \int_1^\tau \sigma^{-1}\left\| J(\sigma)  u\left(\sigma\right) \right\|_{L^2} d \sigma \right)^2 d\tau \right)^{1/2} \\
      & \times \left( \int_s^t \tau^{-2}\|J(\tau)u(\tau)\|^2_{L^2} d\tau\right)^{1/2}   \|\mathcal{F}(\varphi)\|_{L^\infty}.
    \end{aligned}
\end{equation}
To  this end, we apply the Hardy inequality \eqref{H1} and deduce
\begin{equation}
    \begin{aligned}
      &  |II| \lesssim \left(\int_s^t \tau^{-2}\|J(\tau)u(\tau)\|^2_{L^2} d\tau\right)^{1/2} H(t)^{1/2} \|\mathcal{F}(\varphi)\|_{L^\infty}\\
        & = (H(t)-H(s))^{1/2} H(t)^{1/2} \|\mathcal{F}(\varphi)\|_{L^\infty},
    \end{aligned}
\end{equation}
where
\begin{equation}
    H(t) = \int_1^t \tau^{-2} \|J(\tau)u(\tau)\|^2_{L^2} d\tau.
\end{equation}
Collecting everything, we obtain
\begin{equation}\label{pest2}
    \begin{aligned}
       & \left| ( v(t)-v(s)| \varphi) \right| \lesssim
        \left( \frac{1}{s}-\frac{1}{t} \right)^{1/2}  \left( H(t)-H(s)\right)^{1/2}
    \| \nabla \mathcal{F}(\varphi)  \|_{L^2}\\
    & +(H(t)-H(s))^{1/2} H(t)^{1/2} \|\mathcal{F}(\varphi)\|_{L^\infty}
    \end{aligned}
\end{equation}
for any $\varphi \in \mathcal{S},$ and for any $t,s$ with $t>s \geq 1.$

It is easy to deduce that for any $\varphi \in L^2$ the sequence of linear functionals on $L^2$
\begin{equation}
    \Lambda(t) (\varphi) = \left( v(t) | \varphi \right)
\end{equation}
has a pointwise limit as $t \to +\infty.$ In fact, any $\varphi \in L^2$ can be approximated by $\varphi_\varepsilon \in \mathcal{S}$ in $L^2$ so that $\|\varphi - \varphi_\varepsilon\|_{L^2} \leq \varepsilon.$
So we have
\begin{equation}
    \begin{aligned}
       & \left| \left( v(t) -v(s) | \varphi \right) \right|\\
        & \leq \left| \left( v(t) -v(s) | \varphi-\varphi_\varepsilon \right) \right|+ \left| \left( v(t) -v(s) | \varphi_\varepsilon \right) \right| \\
        & \leq 2 \|u_0\|_{L^2} \varepsilon + \left| \left( v(t) -v(s) | \varphi_\varepsilon \right) \right|
    \end{aligned}
\end{equation}
so the estimate \eqref{pest2} shows that $\{((v(t)| \varphi ), t >1\}$ is a Cauchy sequence and the functionals $\Lambda(t), t >1$ are uniformly bounded on $L^2$ due to Banach - Steinhaus theorem so there exists $u_+ \in L^2$ so that
\begin{equation}
    \lim_{t\to +\infty} |((v(t)-u_+) | \varphi) | = 0.
\end{equation}
We can exploit \eqref{pest2} to show the convergence in appropriate Banach space.
Turning back to the choice $\varphi \in \mathcal{S},$ we see that \eqref{pest2}  implies also
\begin{equation}\label{pest2mm}
    \begin{aligned}
       & \left| ( v(t)-u_+| \varphi) \right| \lesssim
        \left( \frac{1}{t}\right)^{1/2}  \left( H(t)-H_0\right)^{1/2}
    \| \nabla \mathcal{F}(\varphi)  \|_{L^2}\\
    & +(H(t)-H_0)^{1/2} H(t)^{1/2} \|\mathcal{F}(\varphi)\|_{L^\infty}.
    \end{aligned}
\end{equation}
where
\begin{equation}
    H_0 = \lim_{t\to +\infty} H(t) = \int_1^{+\infty} \tau^{-2} \|J(\tau)u(\tau)\|^2_{L^2} d\tau.
\end{equation}
Now we turn to the limit in \eqref{wp1}.

We take $\varphi \in \mathcal{S}$ and then we have

\begin{equation}
    \begin{aligned}
        &\left|\left(\left[U(-t) \exp \left(i \Phi_t(u) \right) u(t) - u_+\right]| \varphi \right)\right| \\
         = &\left| \left(\left[M(t)U(-t) \exp \left(i \Phi_t(u) \right) u(t) - M(t)u_+\right]| M(t)\varphi \right)\right|\\
         = &\left| \left(v(t) - M(t)u_+| M(t)\varphi \right)\right| \\
        \leq  &  \left| \left(v(t) - M(t)u_+| \varphi \right)\right| +  \left| \left(v(t) - M(t)u_+| M(t)\varphi - \varphi \right)\right|\\
        \leq&  \left| \left(v(t) - u_+| \varphi \right)\right| + \left| \left(u_+ - M(t) u_+| \varphi \right)\right| + \left| \left([v(t) - M(t)u_+| M(t)\varphi - \varphi \right)\right|\\
        &\lesssim \left| \left(v(t) - u_+| \varphi \right)\right| + (\|u_0\|_{L^2}+\|u_+\|_{L^2}) \|  M(t)\varphi - \varphi\|_{L^2}\\
        & \lesssim \left( \frac{1}{t}\right)^{1/2}  \left( H(t)-H_0\right)^{1/2}
    \| \nabla \mathcal{F}(\varphi)  \|_{L^2} +(H(t)-H_0)^{1/2} H(t)^{1/2} \|\mathcal{F}(\varphi)\|_{L^\infty} \\
    &+  \left( \frac{1}{t}\right)^{1/2} (\|u_0\|_{L^2}+\|u_+\|_{L^2})  \| \nabla \mathcal{F}(\varphi)  \|_{L^2}.
    \end{aligned}
\end{equation}

We have continuous embeddings
\begin{equation}
\begin{aligned}
     &\dot{B}^1_{2,1} \hookrightarrow \dot{B}^1_{2,2} = \dot{H}^1, \\
    & \dot{B}^1_{2,1} \hookrightarrow L^\infty
\end{aligned}
\end{equation}
also the duality relation $ ( \dot{B}^1_{2,1})^\prime = \dot{B}^{-1}_{2,{\infty}}$ so we obtain the estimates
\begin{equation}
    \begin{aligned}
       &\left\|U(-t) \exp \left(i \Phi_t(u) \right) u(t) - u_+\right\|_{\mathcal{F}\dot{B}^{-1}_{2,\infty}} \\
        = &\sup_{\varphi \in \mathcal{F}\dot{B}^1_{2,1}, \|\mathcal{F} \varphi\|_{\dot{B}^1_{2,1}}=1  } \left|\left(\left[U(-t) \exp \left(i \Phi_t(u) \right) u(t) - u_+\right]| \varphi \right)\right| \\
        &\lesssim \left(  \frac{1}{\sqrt{t}} +  H(t)^{1/2} \right)   \left( H(t)-H_0\right)^{1/2}
   +  \frac{1}{\sqrt{t}}  (\|u_0\|_{L^2}+\|u_+\|_{L^2}) .
    \end{aligned}
\end{equation}

This completes the proof.

\end{proof}

\begin{remark}
    The spaces $\mathcal{F}\dot{B}^1_{2,1}$ and $\mathcal{F}\dot{B}^{-1}_{2,\infty}$ correspond to the homogeneous versions of $B_1$ and $B_1^*$ in \cite{AH76}, respectively.
\end{remark}

\begin{remark}
    For any Cauchy data in $u_0 \in \mathcal{H}^{1,1}$ the existence of a weak limit
    \begin{equation}
        w-\lim_{t \to {+\infty}} M(t) U(-t) \exp \left(i \Phi_t(u) \right) u(t)
    \end{equation}
    in $L^2$ has been proved by Hirata \cite{H95}.
\end{remark}

\section{Proof of Theorem \ref{th6}}

An important point in the proof of Theorem \ref{th6} is played by the following.
\begin{prop}
    For  any $z \in \mathbb{C}$ we have
    \begin{equation}\label{(1)}
        |f(z)| \leq 2 |z|^{1+2/n},
    \end{equation}
    \begin{equation}\label{(2)}
       0 \leq V(z) \leq 3 |z|^{2+2/n},
    \end{equation}
    \begin{equation}\label{(3)}
       W(z) = V(z) + n((1+|z|^{2})^{1/n}-1)   - |z|^2,
    \end{equation}
    \begin{equation}\label{(4)}
      W(z) \leq V(z),
    \end{equation}
    where $f(z), V(z), W(z)$ are defined in \eqref{fvW}.
\end{prop}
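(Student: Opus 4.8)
The plan is to reduce every one of the four estimates to a single radial variable, since $f$, $V$ and $\overline z f(z)$ all depend on $z$ only through $r=|z|$. Writing $\langle z\rangle^{2/n}=(1+r^2)^{1/n}$ we have the real, elementary expressions $|f(z)|=((1+r^2)^{1/n}-1)\,r$, $\ V(z)=\tfrac{n}{n+1}((1+r^2)^{(n+1)/n}-1)-r^2$, and $\overline z f(z)=((1+r^2)^{1/n}-1)\,r^2$. With this, \eqref{(1)} is immediate from subadditivity of the concave map $x\mapsto x^{1/n}$ (valid since $1/n\le 1$): applying $(1+r^2)^{1/n}\le 1+(r^2)^{1/n}=1+r^{2/n}$ gives $((1+r^2)^{1/n}-1)\,r\le r^{1+2/n}\le 2\,r^{1+2/n}$, so the stated factor $2$ is in fact generous.

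For \eqref{(2)} I set $s=r^2\ge 0$ and $\theta=(n+1)/n$, so that $V=\theta^{-1}\big((1+s)^\theta-1-\theta s\big)$. The lower bound $V\ge 0$ follows by monotonicity: the function $h(s)=V$ satisfies $h(0)=0$ and $h'(s)=(1+s)^{1/n}-1\ge 0$, hence $h\ge 0$. The upper bound is the delicate point and I would argue it by splitting at $s=1$. For $s\le 1$, Taylor's theorem with the mean-value remainder yields $(1+s)^\theta-1-\theta s=\tfrac12\theta(\theta-1)(1+\sigma)^{\theta-2}s^2$ for some $\sigma\in(0,s)$; since $n\ge 3$ forces $\theta\le 4/3\le 2$, the factor $(1+\sigma)^{\theta-2}\le 1$, while $s^2\le s^\theta$ on $[0,1]$, so this contribution is $\lesssim s^\theta$. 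For $s\ge 1$ I bound crudely, $(1+s)^\theta-1-\theta s\le(1+s)^\theta\le(2s)^\theta=2^\theta s^\theta$, and note $\theta^{-1}2^\theta\le 2^{4/3}<3$. Combining the two regimes gives $V\le 3\,s^\theta=3\,r^{2+2/n}$, which is \eqref{(2)}.

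The identity \eqref{(3)} is pure algebra: expanding $W=(n+2)V-n\,\overline z f(z)$ and using the factorization $(1+r^2)^{(n+1)/n}=(1+r^2)\,(1+r^2)^{1/n}$ makes the terms carrying $r^2(1+r^2)^{1/n}$ cancel, and collecting what remains produces exactly $V(z)+n((1+r^2)^{1/n}-1)-r^2$. Finally \eqref{(4)} drops out of \eqref{(3)}: since $W-V=n((1+r^2)^{1/n}-1)-r^2$, it suffices to show $n((1+s)^{1/n}-1)\le s$, i.e.\ $(1+s)^{1/n}\le 1+s/n$, which is Bernoulli's inequality for the exponent $1/n\le 1$. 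Thus $W-V\le 0$. The only genuine obstacle is the upper bound in \eqref{(2)}, which forces the case split and a uniform control of $(1+s)^\theta-1-\theta s$ for $1<\theta\le 4/3$; the remaining three assertions are a single application of subadditivity, a direct computation, and Bernoulli's inequality, respectively.
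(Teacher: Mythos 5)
Your proposal is correct, and for \eqref{(2)}, \eqref{(3)}, \eqref{(4)} it follows essentially the same route as the paper: the paper also proves \eqref{(2)} by a second-order Taylor bound on $(1+t)^{\lambda}-1$ with $\lambda=(n+1)/n$ (yielding $0\le V(z)\le \frac{1}{2n}|z|^4$, hence $\le \frac{1}{2n}|z|^{2+2/n}$ for $|z|\le 1$) combined with the crude bound $(1+|z|^2)^{(n+1)/n}\le (2|z|^2)^{(n+1)/n}$ for $|z|\ge 1$; it proves \eqref{(3)} by the same factorization $\langle z\rangle^{2+2/n}-1=|z|^2\langle z\rangle^{2/n}+\langle z\rangle^{2/n}-1$ (computing first $(n+1)V-n\bar z f=n(\langle z\rangle^{2/n}-1)-|z|^2$); and it proves \eqref{(4)} by exactly your Bernoulli-type inequality $(1+t)^{1/n}\le 1+t/n$, obtained there from the sign of the second-order Taylor remainder. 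The one genuine difference is \eqref{(1)}: the paper mimics the structure of \eqref{(2)}, splitting into $|z|\le 1$ (where $(1+t)^{1/n}-1\le t/n$ gives $|f(z)|\le \frac{1}{n}|z|^{1+2/n}$) and $|z|\ge 1$ (where $(1+|z|^2)^{1/n}\le(2|z|^2)^{1/n}$ gives the constant $2$), whereas your subadditivity argument $(1+r^2)^{1/n}\le 1+r^{2/n}$ dispenses with the case split entirely and yields the sharper constant $1$; this is a small but real simplification. Two minor remarks: in \eqref{(3)} you describe the cancellation rather than carry it out (the terms in $r^2(1+r^2)^{1/n}$ do not cancel outright but combine to the coefficient $\frac{n}{n+1}$ matching the copy of $V$ on the right-hand side), so you should display that one line of algebra; and note that both your bound $\theta^{-1}2^\theta\le 2^{4/3}<3$ and the paper's $2^{(n+1)/n}\le 3$ use the restriction on $n$ coming from the context of Theorem \ref{th6}, so neither proof of \eqref{(2)} is dimension-free, which is harmless here.
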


\begin{proof}
    For $\lambda >0,$ we put $\varphi(t) = (1+t)^\lambda -1,$ $t >0.$ If $0 < \lambda < 1,$ then
    \begin{equation}
        0 \leq \varphi(t) = \varphi(t)-\varphi(0) = \int_0^t \varphi(\tau) d\tau = \lambda \int_0^t (1+s)^{\lambda-1} ds \leq \lambda t.
    \end{equation}
    This leads to
    \begin{equation}
        |f(z)| = ((1+|z|^{2})^{1/n}-1) |z| \leq \frac{|z|^3}{n}.
    \end{equation}
    In particular, if $|z| \leq 1$, then
    \begin{equation}\label{(5)}
        |f(z)| \leq \frac{1}{n} |z|^{1+2/n} .
    \end{equation}
    If $|z|\geq 1,$ then we have
    \begin{equation}\label{(6)}
        |f(z)|  \leq (1+|z|^{2})^{1/n} |z| \leq   (2|z|^{2})^{1/n} |z| \leq 2 |z|^{1+2/n} .
    \end{equation}
    By \eqref{(5)}
 and \eqref{(6)}, we have \eqref{(1)}  .

 If $1<\lambda < 2,$
 then
 \begin{equation}
 \begin{aligned}
     & 0 \leq \varphi(t) = \varphi^\prime(0) t + \int_0^t (t-s) \varphi^{\prime\prime}(s)ds \\
     &  = \lambda t + \lambda (\lambda-1) \int_0^t (t-s)(1+s)^{\lambda-2} ds \leq \lambda t + \frac{\lambda(\lambda-1)}{2} t^2.
 \end{aligned}
 \end{equation}
 This implies
 \begin{equation}\label{(7)}
     \begin{aligned}
      &   0 \leq V(z) = \frac{n}{n+1} \left[(1+|z|^{2})^{(n+1)/n}-1)- \frac{n+1}{n}z^2 \right] \\
      & \leq \frac{n}{n+1}\frac{n+1}{n} \frac{1}{2n}|z|^4 = \frac{1}{2n}|z|^4.
     \end{aligned}
 \end{equation}
In particular, if $|z|\leq 1,$
\begin{equation}
    0 \leq V(z) \leq \frac{1}{2n}|z|^{2+2/n}.
\end{equation}
If $|z| \geq 1, $ then we have
\begin{equation}\label{(8)}
    \begin{aligned}
        &0 \leq V(z) \leq \frac{n}{n+1} (1+|z|^{2})^{(n+1)/n} \\
        &\leq \frac{n}{n+1} (2|z|^{2})^{(n+1)/n} \leq \frac{3n}{n+1} |z|^{2+2/n} .
    \end{aligned}
\end{equation}
By \eqref{(7)} and \eqref{(8)} we have \eqref{(2)}.

Using the relation
$$ \langle z \rangle^{2+2/n} - 1 = |z|^2\langle z \rangle^{2/n} + \langle z \rangle^{2/n}-1,$$ we find
\begin{equation}
    \begin{aligned}
       & (n+1)V(z) -n \overline{z}f(z) = (n+1) \left[ \frac{n}{n+1} (\langle z \rangle^{2+2/n}-1)   - |z|^2 \right] \\
       &-n |z|^2 \left[(\langle z \rangle^{2/n} -1) \right]\\
        & = \frac{(n+1)n}{n+1} \left(\langle z \rangle^{2/n}-1 \right) + \frac{(n+1)n}{n+1}|z|^2 \langle z \rangle^{2/n} \\
        &-(n+1)|z|^2 -n |z|^2  (\langle z \rangle^{2/n}-1)\\
        & = n\left(\langle z \rangle^{2/n}-1 \right) -|z|^2 .
\end{aligned}
\end{equation}
Hence   we can write
\begin{equation}
    \begin{aligned}
        & W(z) = (n+2) V(z) -n \overline{z} f(z)= V(z) + (n+1)V(z)- n \overline{z} f(z)\\
        &= V(z) + n\left(\langle z \rangle^{2/n}-1 \right) -|z|^2
    \end{aligned}
\end{equation}
 which shows \eqref{(3)}.

For $\lambda \in (0,1)$ we have
\begin{equation}
    (1+t)^\lambda -1-\lambda t = \lambda (\lambda-1)\int_0^t(t-s)(1+s)^{\lambda-2} ds \leq 0.
\end{equation}
 This leads to
 \begin{equation}
     n\left( \langle z \rangle^{2/n}-1\right) -|z|^2 \leq 0
 \end{equation}
 and \eqref{(4)}.
    \end{proof}

Let $u_0\in L^2(\mathbb{R}^n).$ Then there exists a unique global solution $u \in C(\mathbb{R};L^2)$ in Strichartz class, such that
\begin{equation}
   \|u(t)\|_{L^2} = \|u_0\|_{L^2}
\end{equation}
for any $t \in \mathbb{R}.$

Let $u_0\in H^1(\mathbb{R}^n).$ Then there exists a unique global solution $u \in C(\mathbb{R};H^1)$ in Strichartz class, such that
\begin{equation}
    E(t) =E(0)
\end{equation}
for any $t \in \mathbb{R},$ where
\begin{equation}
    E(t) = \frac{1}{2} \|\nabla u(t)\|_{L^2}^2 + \int_{\mathbb{R}^n} V(u(t,x)) dx .
\end{equation}
and $V(z)$ defined in \eqref{fvW}.

Let $u_0\in L^2(\mathbb{R}^n)$ satisfy $xu_0 \in L^2. $ Then the global $L^2 -$ solution $u$ satisfies
$xU(-t)u(t) \in L^2$ and
\begin{equation}
    \|J(t)u(t)\|_{L^2}^2 + t^2 \int_{\mathbb{R}^n} V(u(t,x)) dx = \|xu_0\|_{L^2}^2 +  \int_{\mathbb{R}^n} W(u(t,x)) dx
\end{equation}
for any $t \in \mathbb{R}$ and $V,W$ defined in \eqref{fvW}.

Moreover,
\begin{equation}
    \begin{aligned}
        &\frac{d}{dt} \left(  t^{-1}\left( \|J(t)u(t)\|_{L^2}^2 + t^2 \int_{\mathbb{R}^n} V(u(t,x)) dx\right) \right) \\
        &=-t^{-2} \left( \|J(t)u(t)\|_{L^2}^2 + t^2 \int_{\mathbb{R}^n} V(u(t,x)) dx \right) + \int_{\mathbb{R}^n} W(u(t,x)) dx \\
        &= -t^{-2} \|J(t)u(t)\|_{L^2}^2 + \int_{\mathbb{R}^n} W(u(t,x)) dx - \int_{\mathbb{R}^n} V(u(t,x)) dx\\
        & \leq -t^{-2} \|J(t)u(t)\|_{L^2}^2.
    \end{aligned}
\end{equation}
This yields
\begin{equation}
    \begin{aligned}
         &t^{-1} \|J(t)u(t)\|_{L^2}^2 + t \int_{\mathbb{R}^n} V(u(t,x)) dx + \int_1^t s^{-2} \|J(s)u(s)\|_{L^2}^2 ds\\
         & \leq \|J(1)u(1)\|_{L^2}^2 + \int_{\mathbb{R}^n} V(u(1,x)) dx
    \end{aligned}
\end{equation}
for any $t \geq 1.$

We define the phase function
\begin{equation}
    \Phi_t(u)(x) = \int_1^{+\infty} \left[ (1+|u(\tau, \tau x/t)|^2)^{1/n}-1 \right] d\tau.
\end{equation}

\begin{proof}[Proof of Theorem \ref{th6}]
  We set
\begin{equation}\label{eq.defvt6m}
    v(t) = M(t) U(-t) \exp \left( i \Phi_t(u)\right)\  u(t) .
\end{equation}
Lemma \ref{mle60} guarantees that
   \begin{equation}
       \mathcal{F} M(t) U(-t) (g u(t)) = \tilde{g}   \mathcal{F} M(t) U(-t)  u(t) , \ \ \ \tilde{g}(x) = g(x t).
   \end{equation}
Therefore, the Dollard decomposition implies
\begin{equation}
    v(t) = \mathcal{F}^{-1} \mathcal{F} M(t) U(-t) e^{i \Phi_t(u)} u(t)  =  \mathcal{F}^{-1} e^{i \tilde{\Phi}_t} \mathcal{F} M(t) U(-t)  u(t) ,
\end{equation}
where
\begin{equation}\label{tphi}
    \tilde{\Phi}_t(x) = \int_1^t \left[ (1+|u(\tau, \tau x)|^2)^{1/n}-1 \right]d \tau.
\end{equation}

   Then we have \eqref{eq.meme0}
   and we can refer to Corollary \ref{cor3.1} that implies
  \begin{equation}\label{eq.c3.1m}
    \begin{aligned}
    & i \frac{d}{dt} v(t)  = \mathcal{F}^{-1}  \exp \left(i \tilde{\Phi}_t(x) \right) \frac{(-\Delta)}{2t^2}\mathcal{F} M(t) U(-t) u(t).
    \end{aligned}
\end{equation}

For any $t>s\geq 1$ and for any $\varphi \in \mathcal{S}(\mathbb{R}^n)$ we obtain

\begin{equation}
    \begin{aligned}
       &  ( v(t)-v(s)| \varphi)= -i \int_s^t  ( iv^\prime(\tau)| \varphi) d\tau \\
       & =  \frac{i}{2} \int_s^t \tau^{-2} \left(  \mathcal{F}^{-1}  \exp \left(i \tilde{\Phi}_\tau  \right) \Delta \mathcal{F} \mathbb{W}(\tau)| \varphi  \right ) d\tau\\
       & =\frac{i}{2} \int_s^t \tau^{-2} \left(   \exp \left(i \tilde{\Phi}_\tau  \right) \Delta \mathcal{F} \mathbb{W}(\tau)| \mathcal{F}(\varphi)  \right ) d\tau  \\
       & = -\frac{i}{2} \int_s^t \tau^{-2} \left(   \exp \left(i \tilde{\Phi}_\tau  \right) \nabla \mathcal{F} \mathbb{W}(\tau)| \nabla \mathcal{F}(\varphi)  \right ) d\tau  \\
       &- \frac{i}{2} \int_s^t \tau^{-2} \left( \nabla \left(  \exp \left(i \tilde{\Phi}_\tau  \right)\right)  \nabla \mathcal{F} \mathbb{W}(\tau)| \mathcal{F}(\varphi)  \right ) d\tau =: I + II,
    \end{aligned}
\end{equation}
where
\begin{equation}
    \mathbb{W}(t) = M(t)U(-t) u(t).
\end{equation}
Further we estimate each of the terms $I$ and $II.$
\begin{equation}
\begin{aligned}
    &|I| \leq \int_s^t \tau^{-2} \|J(\tau)u(\tau)\|_{L^2}  d\tau \| \nabla \mathcal{F}(\varphi)  \|_{L^2}  \\
    & \leq \left( \int_s^t \tau^{-2} \right)^{1/2} \left( \int_s^t \tau^{-2} \| J(\tau)u(\tau)\|^2_{L^2}  d\tau\right)^{1/2}
    \| \nabla \mathcal{F}(\varphi)  \|_{L^2}\\
    & \leq \left( \frac{1}{s}-\frac{1}{t} \right)^{1/2}  \left( \int_s^t \tau^{-2} \| J(\tau)u(\tau)\|^2_{L^2}  d\tau\right)^{1/2}
    \| \nabla \mathcal{F}(\varphi)  \|_{L^2}.
\end{aligned}
\end{equation}
Inequality \eqref{eq.jco29} guarantees the smallness of $I.$

The gradient of the phase factor in $II$ is estimated as follows.
\begin{equation}
    \begin{aligned}
       &\nabla  \exp \left(i \tilde{\Phi}_t  \right)  =  i \exp \left(i \tilde{\Phi}_t  \right)  \nabla \tilde{\Phi}_t \\
       =&  i \frac{2}{n}\exp \left(i \tilde{\Phi}_t  \right) \int_1^t \sigma \left( u(\sigma, \sigma x )\right)^{2/n-2} \ \mathrm{Re} \left[ \overline{u\left(\sigma,\sigma x \right)} \nabla  u\left(\sigma,\sigma x \right)\right] d\sigma \\
       &= i \frac{2}{n}\exp \left(i \tilde{\Phi}_t  \right) \int_1^t \sigma \left( u(\sigma, \sigma x )\right)^{2/n-2} \ \mathrm{Im} \left[ \overline{u\left(\sigma,\sigma x \right)} Ju\left(\sigma,\sigma x \right)\right] d\sigma
    \end{aligned}
\end{equation}
and is estimated as
\begin{equation}
    \begin{aligned}
      &\left\| \nabla  \exp \left(i \tilde{\Phi}_\tau  \right)     \right\|_{L^2}  \leq \frac{2}{n}\int_1^\tau \left\| J(\sigma)  u\left(\sigma, \sigma \cdot\right) \right\|_{L^2} d \sigma \\
      & = \frac{2}{n}\int_1^\tau \sigma^{-n/2}\left\| J(\sigma)  u\left(\sigma\right) \right\|_{L^2} d \sigma \\
      & \lesssim  \int_1^\tau \sigma^{-1}\left\| J(\sigma)  u\left(\sigma\right) \right\|_{L^2} d \sigma=: f(\tau).
    \end{aligned}
\end{equation}

We have also
\begin{equation}
    \begin{aligned}
        &\left\|\nabla \mathcal{F} \mathbb{W}(\tau)\right\|_{L^2} \lesssim  \left\|\nabla \mathcal{F}M(\tau) U(-t)u(\tau)\right\|_{L^2} \lesssim \|J(\tau)u(\tau)\|_{L^2}
    \end{aligned}
\end{equation}
In this way, we can obtain
\begin{equation}
    \begin{aligned}
     & |II|  = \left| \frac{1}{2} \int_s^t \tau^{-2} \left( \nabla \left(  \exp \left(i \tilde{\Phi}_\tau  \right)\right)  \nabla \mathcal{F} \mathbb{W}(\tau)| \mathcal{F}(\varphi)  \right ) d\tau \right|  \\
      & \lesssim \int_s^t \tau^{-2} \left\| \nabla \left(  \exp \left(i \tilde{\Phi}_\tau  \right)\right)\right\|_{L^2} \left\|\nabla \mathcal{F}\mathbb{W}(\tau)\right\|_{L^2} \| \|\mathcal{F}(\varphi)\|_{L^\infty}   d\tau\\
      & \lesssim \left[\int_s^t \tau^{-2} \left( \int_1^\tau \sigma^{-1}\left\| J(\sigma)  u\left(\sigma\right) \right\|_{L^2} d \sigma \right) \|J(\tau)u(\tau)\|_{L^2} d\tau \right]  \|\mathcal{F}(\varphi)\|_{L^\infty} \\
      & \lesssim \left(\int_s^t \tau^{-2} \left( \int_1^\tau \sigma^{-1}\left\| J(\sigma)  u\left(\sigma\right) \right\|_{L^2} d \sigma \right)^2 d\tau \right)^{1/2}  \\
      & \times \left( \int_s^t \tau^{-2}\|J(\tau)u(\tau)\|^2_{L^2} d\tau\right)^{1/2}   \|\mathcal{F}(\varphi)\|_{L^\infty}.
    \end{aligned}
\end{equation}
To  this end, we apply the Hardy inequality \eqref{H1} and deduce
\begin{equation}
    \begin{aligned}
      &  |II| \lesssim \left(\int_s^t \tau^{-2}\|J(\tau)u(\tau)\|^2_{L^2} d\tau\right)^{1/2} H(t)^{1/2} \|\mathcal{F}(\varphi)\|_{L^\infty}\\
        & = (H(t)-H(s))^{1/2} H(t)^{1/2} \|\mathcal{F}(\varphi)\|_{L^\infty},
    \end{aligned}
\end{equation}
where
\begin{equation}
    H(t) = \int_1^t \tau^{-2} \|J(\tau)u(\tau)\|^2_{L^2} d\tau.
\end{equation}
Collecting everything, we obtain
\begin{equation}\label{pest2oo}
    \begin{aligned}
       & \left| ( v(t)-v(s)| \varphi) \right| \lesssim
        \left( \frac{1}{s}-\frac{1}{t} \right)^{1/2}  \left( H(t)-H(s)\right)^{1/2}
    \| \nabla \mathcal{F}(\varphi)  \|_{L^2}\\
    & +(H(t)-H(s))^{1/2} H(t)^{1/2} \|\mathcal{F}(\varphi)\|_{L^\infty}.
    \end{aligned}
\end{equation}
for any $\varphi \in \mathcal{S},$ and for any $t,s$ with $t>s \geq 1.$

It is easy to deduce that for any $\varphi \in L^2$ the sequence of linear functionals on $L^2$
\begin{equation}
    \Lambda(t) (\varphi) = \left( v(t) | \varphi \right)
\end{equation}
has a pointwise limit as $t \to {+\infty}.$ In fact, any $\varphi \in L^2$ can be approximated by $\varphi_\varepsilon \in \mathcal{S}$ in $L^2$ so that $\|\varphi - \varphi_\varepsilon\|_{L^2} \leq \varepsilon.$
So we have
\begin{equation}
    \begin{aligned}
       & \left| \left( v(t) -v(s) | \varphi \right) \right|\\
        & \leq \left| \left( v(t) -v(s) | \varphi-\varphi_\varepsilon \right) \right|+ \left| \left( v(t) -v(s) | \varphi_\varepsilon \right) \right| \\
        & \leq 2 \|u_0\|_{L^2}\  \varepsilon + \left| \left( v(t) -v(s) | \varphi_\varepsilon \right) \right|
    \end{aligned}
\end{equation}
and the estimate \eqref{pest2} shows that $\{((v(t)| \varphi ), t >1\}$ is a Cauchy sequence and the functionals $\Lambda(t), t >1$ are uniformly bounded on $L^2.$ From Banach - Steinhaus theorem we see that there exists $u_+ \in L^2$ so that
\begin{equation}
    \lim_{t\to {+\infty}} |((v(t)-u_+), \varphi) | = 0.
\end{equation}
We can exploit \eqref{pest2} to show the convergence in appropriate Banach space.
Turning back to the choice $\varphi \in \mathcal{S},$ we see that \eqref{pest2}  implies also
\begin{equation}\label{pest2mmm}
    \begin{aligned}
       & \left| ( v(t)-u_+| \varphi) \right| \lesssim
        \left( \frac{1}{t}\right)^{1/2}  \left( H(t)-H_0\right)^{1/2}
    \| \nabla \mathcal{F}(\varphi)  \|_{L^2}\\
    & +(H(t)-H_0)^{1/2} H(t)^{1/2} \|\mathcal{F}(\varphi)\|_{L^\infty},
    \end{aligned}
\end{equation}
where
\begin{equation}
    H_0 = \lim_{t\to {+\infty}} H(t) = \int_1^{+\infty} \tau^{-2} \|Ju(\tau)\|^2_{L^2} d\tau.
\end{equation}
Now we turn to the limit in \eqref{wp1}.

We take $\varphi \in \mathcal{S}$ and then we have

\begin{equation}
    \begin{aligned}
        &\left|\left(\left[U(-t) \exp \left(i \Phi_t(u) \right) u(t) - u_+\right]| \varphi \right)\right| \\
         = &\left| \left(\left[M(t)U(-t) \exp \left(i \Phi_t(u) \right) u(t) - M(t)u_+\right]| M(t)\varphi \right)\right|\\
         = &\left| \left(v(t) - M(t)u_+| M(t)\varphi \right)\right| \\
        \leq  &  \left| \left(v(t) - M(t)u_+| \varphi \right)\right| +  \left| \left(v(t) - M(t)u_+| M(t)\varphi - \varphi \right)\right|\\
        \leq&  \left| \left(v(t) - u_+| \varphi \right)\right| + \left| \left(u_+ - M(t) u_+| \varphi \right)\right| + \left| \left([v(t) - M(t)u_+| M(t)\varphi - \varphi \right)\right|\\
        &\lesssim \left| \left(v(t) - u_+| \varphi \right)\right| + (\|u_0\|_{L^2}+\|u_+\|_{L^2}) \|  M(t)\varphi - \varphi\|_{L^2}\\
        & \lesssim \left( \frac{1}{t}\right)^{1/2}  \left( H(t)-H_0\right)^{1/2}
    \| \nabla \mathcal{F}(\varphi)  \|_{L^2} +(H(t)-H_0)^{1/2} H(t)^{1/2} \|\mathcal{F}(\varphi)\|_{L^\infty} \\
    &+  \left( \frac{1}{t}\right)^{1/2} (\|u_0\|_{L^2}+\|u_+\|_{L^2})  \| \nabla \mathcal{F}(\varphi)  \|_{L^2}.
    \end{aligned}
\end{equation}

We have continuous embeddings
\begin{equation}
\begin{aligned}
     &\dot{B}^{n/2}_{2,1} \hookrightarrow \dot{B}^1_{2,2} = \dot{H}^1, \\
    & \dot{B}^{n/2}_{2,1} \hookrightarrow L^\infty
\end{aligned}
\end{equation}
also the duality relation $ ( \dot{B}^{n/2}_{2,1})^\prime = \dot{B}^{-n/2}_{2,\infty}$ so we obtain the estimates
\begin{equation}
    \begin{aligned}
       &\left\|U(-t) \exp \left(i \Phi_t(u) \right) u(t) - u_+ \right\|_{\mathcal{F}\dot{B}^{-n/2}_{2,\infty}} \\
        = &\sup_{\varphi \in \mathcal{F}\dot{B}^{n/2}_{2,1}, \|\mathcal{F} \varphi\|_{\dot{B}^{n/2}_{2,1}}=1  } \left|\left(\left[U(-t) \exp \left(i \Phi_t(u) \right) u(t) - u_+\right]| \varphi \right)\right| \\
        &\lesssim \left(  \frac{1}{\sqrt{t}} +  H(t)^{1/2} \right)   \left( H(t)-H_0\right)^{1/2}
   +  \frac{1}{\sqrt{t}}  (\|u_0\|_{L^2}+\|u_+\|_{L^2}) .
    \end{aligned}
\end{equation}

This completes the proof.

\end{proof}


\section*{Acknowledgements}

The authors are grateful to J.Murphy, H.Mizutani and  H.Miyazaki for the discussions and for pointing out a mistake in the previous version of the manuscript.

The first author was supported in part by
  Gruppo Nazionale per l'Analisi Matematica,  by Institute of Mathematics and Informatics, Bulgarian Academy of Sciences and by Top Global University Project, Waseda University. The second author was supported by JSPS KAKENHI Grant Number 24H00024.

\bibliographystyle{plain}


\bibliography{ModWaveR}

\end{document}